\documentclass[11pt,a4paper]{article}

\oddsidemargin=0mm
\topmargin=-0.8cm \textheight 223mm \textwidth 16.2cm

\usepackage[centertags]{amsmath}
\usepackage{amsfonts}
\usepackage{amssymb}
\usepackage{amsthm}
\usepackage{epsfig}
\usepackage[utf8]{inputenc} 
\usepackage{setspace}
\usepackage{ae} 
\usepackage{eucal} 
\usepackage[numbers, square]{natbib} 
\usepackage{paralist}
\usepackage[usenames]{color}
\usepackage{setspace}
\usepackage{multicol}

\usepackage[active]{srcltx}
\usepackage{hyperref}


\theoremstyle{plain}
\newtheorem{thm}{Theorem}[section]

\newtheorem{prop}[thm]{Proposition}
\theoremstyle{definition}
\newtheorem{defi}[thm]{Definition}

\theoremstyle{remark}
\newtheorem{exmp}[thm]{Example}
\newtheorem{rem}[thm]{Remark}
\newcommand{\DR}{\mathbb{R}}

 \DeclareMathOperator*{\esssup}{esssup}
\DeclareMathOperator*{\essinf}{essinf}
\DeclareMathOperator*{\argmin}{argmin}

\newcommand{\F}{\mathcal{F}}
\newcommand{\lp}{L^{\infty-}}

\allowdisplaybreaks

\begin{document}

\title{Pathwise Iteration for Backward SDEs}

\author{Christian Bender$^{1}$, Christian G\"artner$^{1}$, and Nikolaus Schweizer$^2$}

\maketitle

\footnotetext[1]{Saarland University, Department of Mathematics,
Postfach 151150, D-66041 Saarbr\"ucken, Germany, {\tt
bender@math.uni-sb.de}; {\tt gaertner@math.uni-sb.de}. \\Financial support by the Deutsche Forschungsgemeinschaft under grant BE3933/5-1 is gratefully acknowledged. } 
\footnotetext[2]{University of Duisburg-Essen, Mercator School of Management, Lotharstr. 65, D-46057 Duisburg, {\tt nikolaus.schweizer@uni-due.de}.}
 
\begin{abstract}
 We introduce a novel numerical approach for a class of stochastic dynamic programs which arise as discretizations of backward stochastic differential equations or semi-linear partial differential 
 equations. Solving such dynamic programs numerically requires the approximation of nested conditional expectations, i.e., iterated integrals of previous approximations.  
 Our approach allows us to compute and 
 iteratively improve upper and lower bounds on the true solution starting from an arbitrary and possibly crude input approximation. 
 We demonstrate the benefits of our approach in a high dimensional financial application.
 \\[0.2cm] \emph{Keywords:} Backward stochastic differential equations, dynamic programming, iterated improvement, Monte Carlo
 \\[0.2cm] \emph{AMS subject classifications:} 65C5, 65C30, 49L20, 93E20, 93E24
\end{abstract}

\section{Introduction}

Developing numerical methods for American option pricing, i.e. the  optimal stopping problem, is one of the most specialized and  developed fields in computational finance. We generalize 
several established numerical tools from optimal stopping to a class of convex stochastic dynamic programming equations. Applications  include 
time discretization schemes for backward stochastic differential equations (BSDEs) or, equivalently \cite{pardoux1992backward}, discretization schemes for semi-linear partial differential equations 
(PDEs) where the nonlinearity is convex (or concave). 
In these problems, the numerical challenge has its origin 
in a high order nesting of conditional expectation operators: The approximation at a given time step depends on iterated integrals over the approximations at all future time steps. The curse of 
dimensionality renders many numerical approaches infeasible in such a setting. This includes naive implementations of (nested) Monte Carlo. When moving from optimal stopping to BSDEs or semi-linear 
PDEs, a further numerical challenge arises as approximating derivatives becomes a necessity.

Our main contribution is a pathwise iteration approach which takes an approximate solution of the dynamic programming equation as an input and then constructs upper and lower confidence bounds on 
the true solution. Iteratively taking the super- and subsolutions  corresponding to these bounds as inputs allows to refine the initial bounds. Thus, even a crude input approximation -- such as a 
constant function -- may suffice to provably pin down the solution of a challenging high-dimensional problem up to a tight confidence interval.

For optimal stopping, such ``primal-dual'' approaches for the construction of upper and lower bounds go back to \cite{longstaff2001valuing,tsitsiklis2001regression,R02,HK04,AB04}. 
The approach was extended to our setting of convex dynamic 
programming equations in \cite{BSZ, BGS}, complementing the information relaxation approach of \cite{BSS10} which provides a generalization from optimal stopping to more general optimization problems. 
Iterative improvement methods of the primal-dual approach have been developed in
\cite{CG} for the upper bounds and \cite{KS} for the lower bounds, building on earlier policy iteration techniques due to \cite{H60,P94}. We simplify and unify their arguments in terms of super- and 
subsolutions and generalize them beyond optimal stopping. 

As a second contribution, we introduce two new methods, a minimization and a modified least-squares Monte Carlo (LSMC) method, for computing approximate solutions to stochastic dynamic programming 
equations. While we primarily use these methods as inputs for our improvement approach, both are of independent interest. In the minimization method, we use the pathwise recursion for the 
construction of upper bounds to minimize over a given family of generic input upper bounds. Methods of this type have been proposed in the stopping literature by \cite{Belo13,DFM}. 
Our modified least-squares Monte Carlo algorithm builds on the regress-later method for optimal stopping \cite{GlYu04} and its generalization to BSDEs, the martingale basis algorithm of 
\cite{BS12}. Both 
approaches replace the true solution by an approximation as linear combination of basis functions, for which some computations can be performed in closed form. Our variant of the method has more modest requirements on what can be calculated 
explicitly (i.e. with negligible error), thus increasing its applicability and flexibility. Unlike in optimal stopping and as observed in \cite{BS12}, a considerable benefit of closed-form 
calculations is 
that they may allow to approximate derivatives without further error when derivatives of the basis functions are available. The goal of our modified LSMC algorithm is to gain flexibility by retaining 
only the availability of closed-form derivatives from these previous methods. 

Both the minimization approach and the iterative improvement operate pathwise, i.e., trajectory by trajectory. Compared to classical LSMC methods \cite{longstaff2001valuing,LGW}, they thus have a 
better scope for massively parallel implementations under memory constraints. See \cite{gobet2015stratified} for a recent contribution which highlights these issues and presents a variation of LSMC 
which is more amenable to parallelization.
We confirm the practical applicability of our methods in a classical reference problem, pricing under funding risk in a financial market model driven by a five-dimensional Markov process. 
Depending on the time discretization, this corresponds to integrating out between 100 and 200 variables with a complex dependence structure in our Monte Carlo approach. 

The paper is organized as follows: Section \ref{sec setup} introduces the setting. Section \ref{sec improvement subsolutions} develops the theory behind our iteration approach for subsolutions, i.e., 
lower bounds, while Section \ref{sec improvement supersolutions} provides the 
analogous results for upper bounds. Section \ref{sec Implementation} provides an overview of our numerical approach, including our new approximation methods. Numerical results in 
the context of funding risk are presented in Section \ref{sec funding}.

\section{Setup}
\label{sec setup}

 Throughout the paper, we study the following type of convex dynamic programming equation on a complete filtered probability space $(\Omega, \F, (\F_j)_{j=0,\ldots J}, P)$ in discrete time:
 \begin{align}
    Y_j &= F_j (E_j [\beta_{j+1} Y_{j+1}]), \quad j=J-1,\ldots, 0, \quad Y_J= \xi, \label{dynprog} 
 \end{align}
 with given data $\xi$, $F$ and $\beta$ (to be specified below), where $E_j [\cdot]$ denotes the conditional expectation with respect to $\F_j$. This type of recursive equation encompasses the dynamic programming equation for optimal stopping 
of an adapted discrete time process $S$ (or, in financial terms, the Bermudan option pricing problem),
\begin{align}\label{eq:stopping}
 Y_j=\max\{S_j,E_j [ Y_{j+1}]\},\quad Y_J=S_J,
\end{align}
 see e.g. \cite{KS}, and discretization schemes for backward stochastic differential equations of the form
\begin{align}\label{eq:BSDE}
 Y_j=E_j [ Y_{j+1}]+ (t_{j+1}-t_j)\, G\left(t_j,E_j [ Y_{j+1}], E_j \left[\frac{W_{t_{j+1}}- W_{t_{j}}}{t_{j+1}-t_j} Y_{j+1}\right] \right),\quad Y_J=\xi,
\end{align}
for given data $\xi$ and $G$. Here, $(t_0,\ldots,t_J)$ denotes a partition of a time interval $[0,T]$, $W$ is a multidimensional Brownian motion (whose increments may be truncated for practical purposes),
and $\mathcal{F}_j$ is the information generated by $W$ up to time $t_j$, see \cite{FTW} for this specific scheme in the more general context of second order BSDEs and \cite{BS12} for a literature overview.
 
 We assume that $F_j : \Omega \times \DR^D \rightarrow \DR$
 is measurable for every $j=0,...,J-1$ and that the process $(j,\omega)\mapsto F_j(\omega,z)$ is adapted for every $z \in \DR^D$.  
 Moreover, for every  
 $j=0,\ldots, J-1$ and $\omega\in \Omega$, the map $z\mapsto F_j(\omega,z)$ is convex in $z$. Additionally, $F_j$ satisfies a (stochastic) polynomial growth condition for every $j=0,...,J-1$, i.e. 
 there exist a constant $q \geq 0$ and an adapted, nonnegative processes $\alpha$, which is in $L^p (\Omega,P)$ for every $p \geq 1$, such that
 \[
  \left| F_j(z) \right| \leq \alpha_j (1+ | z |^q )
 \]
 holds $P$-a.s. for every $z \in \DR^D$. The $\DR^D$-valued process $\beta$ is adapted and in $L^p (\Omega,P)$ for every $p \geq 1$.  
 The terminal condition $\xi$ is an $\F_J$-measurable, $\DR$-valued random variable with $E[|\xi|^p] < \infty$ for all $p \geq 1$. 

 Further, we introduce the following  notation: For $m \in \mathbb{N}$, we denote by $\lp(\mathbb{R}^m)$ the set of $\mathbb{R}^m$-valued random variables that are in $L^p(\Omega,P)$
 for all $p \geq 1$. 
 The set of $\F_j$-measurable random variables that are in $\lp(\DR^m)$ is denoted by $\lp_j(\DR^m)$. In addition, $\lp_{ad} (\DR^m)$ denotes the set of adapted processes $Z$ 
 such that $Z_j \in \lp_j(\DR^m)$ for every $j=0,...,J$.
 From the integrability properties of the terminal condition $\xi$ and the weight process $\beta$ as well as the polynomial growth condition
 on $F$, we deduce by backward induction that the ($P$-a.s. unique) solution $Y$ to \eqref{dynprog} is in $\lp_{ad} (\DR)$.
 
 Super- and subsolutions which are central later on  are defined as follows:
\begin{defi}
  A process $Y^{up} \ (\text{respectively } Y^{low}) \in \lp_{ad}(\DR)$ is called \emph{supersolution} (respectively \emph{subsolution}) to the dynamic program \eqref{dynprog} if 
  $Y_J^{up} \geq Y_J$ (respectively $Y_J^{low} \leq Y_J$) and for every $j=0,\ldots,J-1$ it holds that
  \[
   Y_j^{up} \geq F_j \left( E_j \left[ \beta_{j+1} Y_{j+1}^{up} \right] \right) ,\quad P\textnormal{-a.s.}
  \]
  (and with ' $\geq$' replaced by ' $\leq$' for a subsolution).
\end{defi}

\noindent
In general, we cannot expect that super- and subsolutions $Y^{up}$ and $Y^{low}$ to \eqref{dynprog} are bounds on the true solution $Y$, i.e. we need not have that $Y_j^{up} \geq Y_j \geq Y_j^{low}$ 
holds $P$-a.s. for every $j=0,...,J$.
To ensure this, we impose the following \emph{monotonicity} assumption throughout this paper: For $Y^{(1)}, \ Y^{(2)} \in \lp(\DR)$ with $Y^{(1)} \geq Y^{(2)}$ $P$-a.s. and every $j=0,...,J-1$,
it holds that
\begin{align}\label{monotonicity}
 F_j ( \beta_{j+1} Y^{(1)}) \geq F_j (\beta_{j+1} Y^{(2)}),\quad P\textnormal{-a.s.}
 \end{align}
 Applying Theorem 4.3 in \cite{BGS} twice (first with  the filtration $(\mathcal{G}_j)_{j=0,\ldots,J}=(\mathcal{F})_{j=0,\ldots,J}$, and then with the given filtration $(\mathcal{F}_j)_{j=0,\ldots,J}$), we observe that this monotonicity assumption implies the following \emph{comparison principle}:
 
\begin{prop}
Let $Y^{up}$ and $Y^{low}$ be super- and subsolutions to \eqref{dynprog}. Then, under the given assumptions, it holds that,  for every $j=0,...,J$,
 \[
  Y_j^{up} \geq Y_j^{low} \quad P\text{-a.s.}
 \]
 \label{comparison}
\end{prop}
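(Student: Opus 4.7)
The plan is straightforward backward induction on $j$, chaining the supersolution inequality for $Y^{up}$ and the subsolution inequality for $Y^{low}$ through a monotonicity of $F_j$ applied to conditional expectations. The base case $j=J$ is immediate from the definition: $Y^{up}_J \geq Y_J \geq Y^{low}_J$ $P$-a.s.

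For the inductive step, I would assume $Y^{up}_{j+1} \geq Y^{low}_{j+1}$ $P$-a.s.\ and aim to establish the chain
\[
 Y^{up}_j \;\geq\; F_j\bigl(E_j[\beta_{j+1} Y^{up}_{j+1}]\bigr) \;\geq\; F_j\bigl(E_j[\beta_{j+1} Y^{low}_{j+1}]\bigr) \;\geq\; Y^{low}_j,
\]
in which the first and last inequalities are by definition of super- and subsolution, respectively, and everything hinges on the middle step. The integrability conditions on $\beta$, $\xi$ and the polynomial growth of $F$ ensure that all conditional expectations are well defined and that both $Y^{up}$ and $Y^{low}$ lie in $\lp_{ad}(\DR)$, so passing to $E_j[\cdot]$ is unproblematic.

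The main obstacle is the middle inequality: the monotonicity hypothesis \eqref{monotonicity} is stated pathwise for $\beta_{j+1} Y^{(i)}$ with $Y^{(i)} \in \lp(\DR)$, not through the conditional expectation operator, so it does not apply verbatim. Here I would invoke the recipe sketched in the paragraph preceding the proposition, namely Theorem~4.3 of \cite{BGS} applied twice. The first application, with the constant filtration $\mathcal{G}_j\equiv \mathcal{F}$, upgrades the pathwise monotonicity of $z \mapsto F_j(z)$ along rays $z=\beta_{j+1} Y$ to a comparison principle for the trivial one-step recursion with $\mathcal{F}$-measurable inputs; the second application, now with the given filtration $(\mathcal{F}_j)$, delivers the comparison through the conditional expectation $E_j[\beta_{j+1}\cdot]$ that actually appears in \eqref{dynprog}. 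Combining these two applications yields the desired middle inequality, and the induction closes.

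If I were writing this out in full, I would essentially just cite Theorem~4.3 of \cite{BGS} together with the induction; the only subtle point to double-check is that the integrability provided by $Y^{up},Y^{low}\in \lp_{ad}(\DR)$ and by the hypotheses on $\beta$ and $F$ matches the exact assumptions of that theorem, so that both applications are legitimate.
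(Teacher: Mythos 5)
Your proposal is correct and follows essentially the same route as the paper: the paper gives no written proof beyond invoking Theorem~4.3 of \cite{BGS} twice (once with the constant full-information filtration, once with the given filtration $(\mathcal{F}_j)$), which is exactly the citation you rely on for the key middle inequality. Your explicit backward induction merely spells out what that double application delivers, so there is no substantive difference in approach.
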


The improvement algorithms presented in the following sections are based on the primal-dual methodology introduced by \cite{AB04,HK04,R02} in the context of Bermudan option pricing
and further developed in \cite{BSZ} and \cite{BGS} for dynamic programming equations of the form \eqref{dynprog}. This approach relies on 
the choice of suitable martingales and controls which are derived from an approximate solution to \eqref{dynprog} and are used as an input for constructing 
super- and subsolutions. We, therefore, denote by $\mathcal{M}_D$ 
the set of $\DR^D$-valued martingales which are elements of $\lp_{ad} (\DR^D)$.
For a process $Z \in \lp_{ad} (\DR^D)$, we refer to the martingale part of the Doob decomposition of $Z$, which is given by 
\[
 \sum_{i=0}^{j-1} Z_{i+1} - E_i [Z_{i+1}], \quad j=0,...,J,
\]
as \emph{Doob martingale of $Z$}. In particular, we get that the Doob martingale of the process $\beta \bar{Z}$ is in $\mathcal{M}_D$ for any $\bar{Z} \in \lp_{ad}(\DR)$. 
While suitable martingales are the main ingredient of the upper bounds, we derive lower bounds  by rewriting \eqref{dynprog} as a stochastic control problem using 
convex duality techniques. To this end, recall that the convex conjugate of $F_j$ is, for every $\omega \in \Omega$, given by
 \[
  F_j^{\#} \left(\omega,u \right) := \sup_{z \in \DR^D} (u^{\top} z - F_j (\omega,z)),
 \]
 with effective domain 
 \[
  D_{F^\#}^{(j,\omega)} = \left\{ u \in \DR^D \ \left| \ F_j^{\#} (\omega,u) < \infty \right. \right\}. 
 \]
As we will see below, the sets of admissible controls in our problem are given by
\begin{align}
  \mathcal{A}_j = \left\{ \left. \left(r_i \right)_{i=j,\ldots,J-1} \right| r_i \in \lp_i (\DR^D), F_i^{\#} (r_i) \in \lp(\DR) \textnormal{ for }i=j,\ldots,J-1\right\}, 
\end{align}
where $j=0,...,J-1$.

 \section{Improvement of subsolutions}
 \label{sec improvement subsolutions}
 
 In this section we propose an iterative algorithm to improve a given subsolution to \eqref{dynprog}. This approach generalizes in some sense the idea of \cite{KS}, who presented an iterative
 method to improve a given family of stopping times in the context of Bermudan option pricing. We begin this section by recalling  a construction of subsolutions from \cite{BSZ}
and, then, explain how it can be used to improve arbitrary subsolutions. 
 
 In order to construct a subsolution to \eqref{dynprog}, we linearize this dynamic programming equation in the following way: By convexity and closedness of $F_j$, we have due to Theorem 12.2 in \cite{rockafellar} that $F_j^{\#\#} = F_j$ for every $j=0,...,J-1$ and $\omega \in \Omega$. Hence, for
 every $j=0,...,J-1$, $\omega \in \Omega$ and $z \in \DR^D$, it holds that
 \begin{align}
  F_j(\omega,z) = \sup_{u \in \mathbb{R}^D} u^{\top} z - F_j^{\#}(u). \label{linearization}
 \end{align}
 From Lemma A.1 in \cite{BGS}, we get existence of an adapted process $r^* \in \mathcal{A}_0$ which solves 
 \begin{align}
  \left(r_i^*\right)^{\top} E_i[\beta_{i+1} Y_{i+1}] - F_i^{\#} (r_i^*) &= F_i (E_i [\beta_{i+1} Y_{i+1}]) \quad P\text{-a.s.}, \label{optimal control}
 \end{align}
 for every $i=0,...,J-1$.
 We now define the typically non-adapted process $\theta^{low}$ as in Remark 3.6 (ii) in \cite{BSZ}. To this end, we fix a martingale $M \in \mathcal{M}_D$
 and an admissible control $r \in \mathcal{A}_0$. Then, the pathwise recursion for $\theta^{low} := \theta^{low} (r,M)$ is, in our notation, given as follows:
 \begin{align}
   \theta_j^{low} &= r_j^{\top} \beta_{j+1} \theta_{j+1}^{low} -r_j^{\top} \Delta M_{j+1} - F_j^{\#} (r_j), \quad j=J-1,...,0, \quad  \theta_J^{low} = \xi, \label{recursion low}
 \end{align}
 where $\Delta M_{j+1} := M_{j+1}-M_j$.
 By backward induction, we get that $\theta_j^{low} \in \lp(\DR)$ for every $j=0,...,J$, since $r$ and $\beta$ are in $\lp_{ad}(\DR^D)$ and $M \in \mathcal{M}_D$ by assumption. 
 Hence, we can define a subsolution $Y^{low}$ by $Y_j^{low} := E_j [\theta_j^{low} ]$ for every $j=0,...,J$. Indeed, by the tower property of the conditional 
 expectation and \eqref{linearization}, we observe that
 \begin{align*}
 Y_j^{low} = r_j^{\top} E_j \left[\beta_{j+1} \theta_{j+1}^{low} \right] - F_j^{\#} (r_j) \leq F_j \left(E_j \left[\beta_{j+1} Y_{j+1}^{low} \right] \right)
 \end{align*}
 holds and, hence, by Proposition \ref{comparison}, we conclude that
 $Y_j \geq Y_j^{low}$ $P$-a.s., for any $j=0,...,J$. 
 Moreover, \cite{BSZ} prove that the solution $Y$ to \eqref{dynprog} is the value of a primal maximization problem, i.e.
 \[
  Y_j = \esssup_{r \in \mathcal{A}_j} E_j [\theta_j^{low} (r,M)] \quad P \textnormal{-a.s.}, \quad j=0,...,J.
 \]
 Indeed, every control $r^* \in \mathcal{A}_j$ which satisfies   \eqref{optimal control} for $i=j,\ldots, J-1$ achieves the maximum. 
 We emphasize that the expression $E_j [\theta_j^{low}(r,M)]$ is independent of $M$, as the martingale is only a control variate in the recursion for $\theta^{low}$ and thus vanishes by taking conditional expectation.
 However, a straightforward computation shows that the Doob martingale $M^*$ of $\beta Y$ acts as a perfect control variate in the case of optimal controls, i.e. for every $j=0,...,J$ it holds that, 
 \begin{align}\label{eq:optimal_variate}
  \theta_j^{low} (r^*,M^*) = Y_j \quad P \textnormal{-a.s.} 
 \end{align}

 \subsubsection*{Iterative improvement of subsolutions}
 
 Suppose we are given an arbitrary subsolution $\bar{Y}$. We next show that the construction of the process $\theta^{low} (r,M)$ in  \eqref{recursion low}, implies an improvement of the subsolution 
 $\bar{Y}$ in the sense that for suitable choices of $r \in \mathcal{A}_0$ and  $M \in \mathcal{M}_D$ we have
 \[
  Y_j \geq E_j \left[ \theta_j^{low} (r,M) \right] \geq \bar{Y}_j \quad P \textnormal{-a.s.}
 \]
 for every $j=0,...,J$. The subsolution $(E_j [\theta_j^{low} (r,M)])_{j=0,...,J}$ is then called an \emph{improvement} of the subsolution $\bar{Y}$. Theorem \ref{improvement subsolution} below, 
 explains how to construct such an improvement. Further, we show that our construction only gets stuck if the subsolution $\bar{Y}$, which we want to improve, already coincides 
 with the solution $Y$ to \eqref{dynprog}.
 
 \begin{thm}
  Let $j\in \{0,...,J-1\}$, let $\bar{Y}$ be a subsolution to \eqref{dynprog} and denote by $\bar{M} \in \mathcal{M}_D$ the Doob martingale of $\beta \bar{Y}$.
  Further let $\bar{r} \in \mathcal{A}_0$ be an adapted process that solves 
  \begin{align}
   \bar{r}_i^{\top} E_i \left[\beta_{i+1} \bar{Y}_{i+1} \right] - F_i^{\#}(\bar{r}_i) = F_i \left(E_i \left[\beta_{i+1} \bar{Y}_{i+1} \right] \right) \quad P \textnormal{-a.s.} \label{equation policy}
  \end{align}
  for every $i=0,...,J-1$. Then, for any $M \in \mathcal{M}_D$, $\theta^{low} (\bar{r},{M})$ defined by \eqref{recursion low} satisfies
  \begin{align}
   Y_i \geq E_i \left[ \theta_i^{low}(\bar{r},{M}) \right] \geq F_i \left(E_i \left[\beta_{i+1} \bar{Y}_{i+1} \right] \right) \geq \bar{Y}_i \quad P \textnormal{-a.s.}, \label{improvement subsolution inequalities}
  \end{align}
  for all $i=0,...,J-1$.
  Moreover, if $\bar{Y}_i = Y_i$ for all $i=j+1,...,J$, then 
  \[
  E_j \left[ \theta_j^{low}(\bar{r},{M})\right]= \theta_j^{low}(\bar{r},\bar{M}) = Y_j \quad P \textnormal{-a.s.}
  \]
  \label{improvement subsolution}
 \end{thm}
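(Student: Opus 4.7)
I plan to split the chain \eqref{improvement subsolution inequalities} into its three constituent inequalities, handling them separately, and then address the concluding equality statement.

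The rightmost inequality $F_i(E_i[\beta_{i+1}\bar{Y}_{i+1}]) \geq \bar{Y}_i$ is just the subsolution property of $\bar{Y}$. For the leftmost inequality $Y_i \geq E_i[\theta_i^{low}(\bar{r},M)]$, I set $\tilde{Y}_k := E_k[\theta_k^{low}(\bar{r},M)]$ for $k<J$ and $\tilde{Y}_J := \xi$, and verify that $\tilde{Y}$ is a subsolution to \eqref{dynprog}. Taking $E_k$ in \eqref{recursion low} kills the martingale increment $\bar{r}_k^{\top} \Delta M_{k+1}$, and combining the tower property with the $\F_{k+1}$-measurability of $\beta_{k+1}$ yields $\tilde{Y}_k = \bar{r}_k^{\top} E_k[\beta_{k+1}\tilde{Y}_{k+1}] - F_k^{\#}(\bar{r}_k)$. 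The Fenchel inequality from \eqref{linearization} then gives $\tilde{Y}_k \leq F_k(E_k[\beta_{k+1}\tilde{Y}_{k+1}])$, so Proposition~\ref{comparison} delivers $Y_i \geq \tilde{Y}_i$.

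The middle inequality $E_i[\theta_i^{low}(\bar{r},M)] \geq F_i(E_i[\beta_{i+1}\bar{Y}_{i+1}])$ is the key step. Using \eqref{equation policy} to rewrite the right-hand side as $\bar{r}_i^{\top} E_i[\beta_{i+1}\bar{Y}_{i+1}] - F_i^{\#}(\bar{r}_i)$ and subtracting the representation of $\tilde{Y}_i$ derived above reduces the claim to $\bar{r}_i^{\top} E_i[\beta_{i+1}(\tilde{Y}_{i+1} - \bar{Y}_{i+1})] \geq 0$. I would establish this by backward induction in $i$, with base case $\tilde{Y}_J = \xi \geq \bar{Y}_J$ given by the terminal subsolution property. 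I expect the main obstacle to be producing the correct sign: this should come from the monotonicity assumption \eqref{monotonicity} combined with the characterization of $\bar{r}_i$ as a Fenchel maximizer at $E_i[\beta_{i+1}\bar{Y}_{i+1}]$, which controls the action of $\bar{r}_i$ on $E_i[\beta_{i+1}\,\cdot\,]$ applied to nonnegative $\F_{i+1}$-measurable increments.

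For the equality claim under $\bar{Y}_i = Y_i$ for all $i \geq j+1$: the relation \eqref{equation policy} then becomes identical to the optimality condition \eqref{optimal control} for every $i \geq j$, so $\bar{r}$ is an optimal control along the tail. Moreover, because $\bar{Y}$ and $Y$ agree on the tail, the increments $\Delta \bar{M}_{i+1} = \beta_{i+1}\bar{Y}_{i+1} - E_i[\beta_{i+1}\bar{Y}_{i+1}]$ for $i \geq j$ coincide with those of the Doob martingale $M^{*}$ of $\beta Y$. Since the recursion \eqref{recursion low} for $\theta_j^{low}(\bar{r},\bar{M})$ only depends on $\bar{r}_i$ and $\Delta \bar{M}_{i+1}$ for $i \geq j$, applying \eqref{eq:optimal_variate} along the tail yields $\theta_j^{low}(\bar{r},\bar{M}) = Y_j$, which is $\F_j$-measurable. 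Finally, the martingale-cancellation argument from the leftmost inequality shows that $E_j[\theta_j^{low}(\bar{r},M)]$ is independent of the choice of $M \in \mathcal{M}_D$, so $E_j[\theta_j^{low}(\bar{r},M)] = E_j[\theta_j^{low}(\bar{r},\bar{M})] = Y_j$.
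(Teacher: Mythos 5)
Your proposal is correct and follows essentially the same route as the paper's proof: the left inequality via the subsolution property of $(E_i[\theta_i^{low}(\bar r,M)])_i$ and Proposition \ref{comparison}, the middle inequality by backward induction reducing to a sign condition (you work directly with the conditional expectations $\tilde Y_i$, whereas the paper proves the slightly stronger pathwise inequality for $\theta^{low}(\bar r,\bar M)$ and then uses $M$-independence, a cosmetic difference), and the equality claim via the identification of $\bar r$ with an optimal control and of $\Delta\bar M$ with the increments of the Doob martingale of $\beta Y$ together with \eqref{eq:optimal_variate}. The one step you flag as the ``main obstacle'' is settled exactly as you anticipate: the Fenchel-maximizer property places $\bar r_i$ in the effective domain of $F_i^{\#}$, and the equivalent characterization of the monotonicity assumption \eqref{monotonicity} via positivity (Theorem 4.3 in \cite{BGS}) yields $\bar r_i^{\top}\beta_{i+1}\geq 0$ $P$-a.s., which gives $\bar r_i^{\top}E_i[\beta_{i+1}(\tilde Y_{i+1}-\bar Y_{i+1})]=E_i[\bar r_i^{\top}\beta_{i+1}(\tilde Y_{i+1}-\bar Y_{i+1})]\geq 0$ under your induction hypothesis.
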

 
 \begin{proof}
  As we have seen above, the process $(E_j [\theta_j^{low} (r,M)])_{j=0,...,J}$ defines a subsolution for any martingale $M \in \mathcal{M}_D$ and $r \in \mathcal{A}_0$, so that the first inequality 
  in \eqref{improvement subsolution inequalities} is already shown. The last inequality in  \eqref{improvement subsolution inequalities} is immediate, as $\bar Y$ is assumed to be a subsolution.
To prove the remaining inequality in  \eqref{improvement subsolution inequalities}, we denote $\theta^{low}=\theta^{low}(\bar r, \bar M)$.
Recalling that $E_j [\theta_j^{low} (r,M)]$ does not depend on the choice of $M \in \mathcal{M}_D$, it suffices to show that
\[
   \theta_i^{low} \geq F_i \left(E_i \left[\beta_{i+1} \bar{Y}_{i+1} \right] \right)  \quad P \textnormal{-a.s.}
  \]
  by backward induction on $i$. The assertion then follows by the monotonicity of the conditional expectation. The case $i=J$ is trivial, since we have
  $
   \theta_J^{low} = \xi \geq \bar{Y}_J,
  $
  by definition.
  Now suppose that the assertion is true for $i+1 \in \{1,...,J\}$, and, thus, we have $\theta_{i+1}^{low} \geq \bar{Y}_{i+1}$ $P$-a.s.  
  Then, it follows from the definition of $\bar{M}$, \eqref{equation policy}, and the induction hypothesis that 
  \begin{eqnarray*}
   \theta_i^{low}  &=& \bar{r}_i^{\top} (\beta_{i+1} \theta_{i+1}^{low} - (\beta_{i+1} \bar{Y}_{i+1} -E_i [\beta_{i+1} \bar{Y}_{i+1}])) - F_i^{\#}(\bar{r}_i) \\
		   &=& \bar{r}_i^{\top} \beta_{i+1} (\theta_{i+1}^{low} - \bar{Y}_{i+1}) + F_i(E_i [\beta_{i+1} \bar{Y}_{i+1}]) \geq F_i (E_i [\beta_{i+1} \bar{Y}_{i+1}]).
  \end{eqnarray*}
  Here, the inequality is a consequence of the induction hypothesis and the positivity of $\bar{r}_i^{\top} \beta_{i+1}$ which is due to the monotonicity assumption \eqref{monotonicity}
  on the function $F_i$, see Theorem 4.3 in \cite{BGS} (with the constant full information filtration $(\mathcal{G}_j)_{j=0,\ldots,J}=(\mathcal{F})_{j=0,\ldots,J}$).
  
  To complete the proof, we assume that $\bar{Y}_i = Y_i$ for all $i=j+1,...,J$. 
  Hence, we observe by \eqref{equation policy}, that $\bar{r}_i$ satisfies the optimality condition \eqref{optimal control} 
  $P$-a.s. for every $i=j,...,J$.  
  Further, we conclude by the definition of $\bar{M}$ that the increments $\bar{M}_{i+1}- \bar{M}_i$ coincide with the ones of the Doob martingale of 
$\beta Y$ for $i=j,...,J-1$.
  By \eqref{eq:optimal_variate}, we thus obtain that 
  $
   \theta_j^{low} = Y_j 
  $
  $P$-a.s., which completes the proof.
 \end{proof}
 
 When starting with an arbitrary subsolution, we typically do not obtain the solution $Y$ by applying
 the approach described in Theorem \ref{improvement subsolution} once. However, this construction can be iterated in a straightforward way:
 Let ${Y}^{(low,0)}$ be a subsolution and define $\theta^{(low,0)} := {Y}^{(low,0)}$. We define the $k$-th iteration according to \eqref{recursion low} by 
 \begin{align}
   \theta^{(low,k)} := \theta^{low} (r^{(k)},M^{(k)}),\quad k \geq 1,  	\label{iteration subsolution}
 \end{align}
 where 
the process $r^{(k)} \in \mathcal{A}_0$ is for every $j=0,...,J$ given by
 \begin{align}
  \left( r_j^{(k)} \right)^{\top} E_j \left[ \beta_{j+1} \theta_{j+1}^{(low,k-1)} \right] - F_j^{\#} \left(r_j^{(k)} \right) = F_j \left( E_j \left[\beta_{j+1} \theta_{j+1}^{(low,k-1)} \right] \right), 
  \label{iteration subsolution control}
 \end{align}
and $M^{(k)} \in \mathcal{M}_D$ is arbitrary. Applying Theorem \ref{improvement subsolution} iteratively, we observe that $ E_j[\theta_j^{(low,k)}]\geq 
 E_j[\theta_j^{(low,k-1)}]$, $P$-a.s, for every $k\geq 1$ and $j=0,\ldots, J$. Moreover,
\[
   E_i\left[\theta_i^{(low,J-j)}\right] = Y_i \quad P \textnormal{-a.s.},
  \]
whenever $i\geq j$. In the last equation, the conditional expectation on the left-hand side can be removed, when each $M^{(k)}$ is taken as the Doob martingale of 
$\beta_j E_j[\theta^{(low,k-1)}_j]$. We, thus, observe that $Y$ is the $P$-a.s. unique fixed point of this 
 iteration, which actually terminates after at most $J$ iteration steps.

 \subsubsection*{Improvement of a family of subsolutions}
 
 In Section \ref{sec Implementation} below, we explain that the numerical costs of algorithms based on \eqref{iteration subsolution} tend to grow exponentially in the number of 
 iterations $k$. For this reason, a moderate number of iterations must suffice in practical implementations. One way to address this issue is to improve a whole family of subsolutions simultaneously 
 instead of just one subsolution. 
 To this end, let $(\bar{Y}^{\{l\}})_{l \in I}$ be a family of subsolutions, where $I$ is a finite index set.  
 Further, we denote by $K(j)$, $j=1,...,J$, a nondecreasing sequence of subsets of $I$, i.e. it holds that $K(j) \subset K(j+1)$. 
 Then, we consider the predictable, $I$-valued process 
  \[
   l^*(j) =  \inf \left\{ l \in K(j)  \left| \ \forall \iota \in K(j) \ F_{j-1} \left(E_{j-1} \left[\beta_j \bar{Y}_j^{\{l\}}\right] \right. \right) 
	    \geq F_{j-1} \left(E_{j-1} \left[\beta_j \bar{Y}_j^{\{\iota\}}\right] \right) \right\}.	\label{def lstern}
   \]
 This means that, at every time point $j=1,...,J$, we only consider those subsolutions which are represented in the subset $K(j)$ and the random variable $l^*(j)$ returns an index $l\in K(j)$
at which the evaluation of $F_{j-1}$ is maximized. In the simplest case $K(j) = I$ for all $j=1,...,J$. More sophisticated choices of $K(j)$ allow to reduce the computational costs to determine 
$l^*$.
 We claim that the process $\bar{Y}$ which is given by 
  \begin{align}\label{eq:familyimprovement}
   \bar{Y}_j= \bar{Y}_j^{\{l^*(j)\}} {\bf 1}_{\{ j>0\}}+  F_0 \left(E_0 \left[\beta_1 \bar{Y}_1 \right] \right){\bf 1}_{\{ j=0\}}
  \end{align}
 is a subsolution to \eqref{dynprog}, which allows us to improve the subsolutions $(\bar{Y}^{\{l\}})_{l \in I}$ simultaneously. To examine the subsolution property of $\bar{Y}$, we first observe that
 the case $j=0$ is trivial, since we have $\bar{Y}_0 = F_0 (E_0 [\beta_1 \bar{Y}_1])$ by definition. For the case $j>0$, we get by the 
 subsolution property of $\bar{Y}^{\{l\}}$ for every $l\in I$, and as $K(j) \subset K(j+1)$, that
  \begin{align*}
   \bar{Y}_j   &= \sum_{l \in K(j)} \bar{Y}_j^{\{l\}} 1_{\{l^*(j) = l \}} \leq \sum_{l \in K(j)} F_j \left(E_j \left[\beta_{j+1} \bar{Y}_{j+1}^{\{l\}} \right] \right) 1_{\{l^*(j) = l \}} \\
	       &\leq \sum_{l \in K(j)} F_j \left(E_j \left[\beta_{j+1} \bar{Y}_{j+1}^{\{l^*(j+1)\}} \right] \right) 1_{\{l^*(j) = l \}} = F_j \left(E_j \left[\beta_{j+1} \bar{Y}_{j+1} \right] \right) \quad P \textnormal{-a.s.}
  \end{align*}
  Hence, Theorem \ref{improvement subsolution} can be applied to the process $\bar{Y}$ and implies, for $\theta^{low}=\theta^{low} (\bar{r},M)$,
  \begin{align}\label{eq:familyimprovement2}
   E_j \left[\theta_j^{low} \right] \geq F_j \left(E_j \left[\beta_{j+1} \bar{Y}_{j+1} \right] \right) 
       = \max_{l \in K(j+1)} F_j \left(E_j \left[\beta_{j+1} \bar{Y}_{j+1}^{\{l\}} \right] \right) \geq \max_{l \in K(j+1)}\bar{Y}_j^{\{l\}}
  \end{align}
  $P$-a.s. for all $j=0,...,J-1$, where $\bar{r}$ is for every $j=0,...,J-1$ given by \eqref{equation policy} and where $M \in \mathcal{M}_D$. 
  Thus, if $K(j)=I$ for all $j=1,...,J$, we achieve a simultaneous improvement of all subsolutions $(\bar{Y}^{\{l\}})_{l \in I}$ by improving $\bar{Y}$.

\begin{exmp}
 We consider the optimal stopping problem of an adapted process $S$, whose value process $Y$ (the so-called Snell envelope) is governed by \eqref{eq:stopping}. Suppose
$(\tau_l)_{l\in I}$, where $I=\{0,\ldots, J\}$, is a family of 
$I$-valued stopping times which is consistent in the sense of \cite{KS}: For every $l\in I$
$$
\tau_l \geq l\quad \textnormal{ and } \quad \left(\tau_l >l \; \Rightarrow \; \tau_l=\tau_{l+1} \right).
$$
This consistency condition implies  that each of the processes $\bar{Y}^{\{l\}}_j:=E_j[S_{\max\{\tau_l,\tau_j\}}]$, $l\in I$, defines a subsolution to  \eqref{eq:stopping}.
Define  $K(j)=\{0,\ldots, \min\{j+\kappa-1,J\}\}$ for some window parameter $\kappa \in \mathbb{N}$.  Specializing
the improvement condition \eqref{equation policy} based on the subsolution constructed in \eqref{eq:familyimprovement} to the optimal stopping problem, one can verify that the improved subsolution 
satisfies, for every control variate $M\in \mathcal{M}_D$, $E_j[\theta^{low} (\bar{r},M)]=E_j[S_{\bar{\tau}_j}]$ where the stopping times $\bar{\tau}_j$ are given by
$$
\bar{\tau}_j=\inf\{i\geq j;\; S_i\geq \max_{l=i+1,\ldots, \min\{i+\kappa, J\}} E_i[S_{\tau_l} ]\},\quad j=0,\ldots,J.
$$ 
Hence, for every $j=0,\ldots,J$, by \eqref{eq:familyimprovement2} and the consistency condition,
$$
E_j[S_{\bar{\tau}_j}]\geq \max_{l=j+1,\ldots, \min\{j+\kappa, J\}}  \max\{E_j[S_{\tau_l} ], S_j\} \geq \max_{l=j,\ldots, \min\{j+\kappa, J\}} E_j[S_{\tau_l} ]  , \quad P\textnormal{-a.s.} 
$$
Thus, we recover the policy improvement result in Theorem 3.1 of \cite{KS} as a special case of our approach.
\end{exmp}

\section{Improvement of supersolutions}
\label{sec improvement supersolutions}

In this section, we propose an iterative way for improving supersolutions to convex dynamic programs like \eqref{dynprog}. We generalize the construction of \cite{CG}, who presented an 
improvement approach for supersolutions in the context of optimal stopping. Similar to Section \ref{sec improvement subsolutions}, we build our
approach on the pathwise recursion for upper bounds presented in \cite{BSZ}. Therefore, we begin this section with a brief overview of their construction and explain how it can be applied for improving
arbitrary supersolutions. The remainder of this section is dedicated to transferring the results obtained in Section \ref{sec improvement subsolutions} to supersolutions.

The main idea of the pathwise approach presented in \cite{BSZ} is to remove the appearing conditional expectations in \eqref{dynprog} and, instead, subtract a martingale 
increment. More precisely, let $M \in \mathcal{M}_D$ be a martingale. Then, we define the typically non-adapted process $\theta^{up}:=\theta^{up}(M)$ by
\begin{align}
  \theta_j^{up} &= F_j (\beta_{j+1} \theta_{j+1}^{up} - \Delta M_{j+1}),\quad j=0,...,J-1 ,\quad \theta_J^{up} = \xi.  \label{recursion up}
\end{align}
 Due to the polynomial growth condition on $F_j$ and the integrability properties of $\beta$ and $M$, we get by backward induction that $\theta_j^{up} \in \lp(\DR)$ for every 
$j=0,...,J$.
Setting $(Y_j^{up})_{j=0,...,J} = (E_j [\theta_j^{up}])_{j=0,...,J}$, we observe immediately by Jensen's inequality, the martingale property of $M$, and the 
tower property of the conditional expectation that $Y^{up}$ is a supersolution to \eqref{dynprog}:
\begin{align*}
  Y_j^{up} &= E_j [F_j (\beta_{j+1} \theta_{j+1}^{up} - \Delta M_{j+1})] \geq  F_j ( E_j [\beta_{j+1} Y_{j+1}^{up}]).
\end{align*}
Similarly to the results in Section \ref{sec improvement subsolutions}, \cite{BSZ} show that the solution $Y$ to \eqref{dynprog} can be represented as a dual minimization problem, i.e.
\begin{align}\label{mini_prob}
 Y_j = \essinf_{M \in \mathcal{M}_D} E_j [\theta_j^{up}(M)] \quad P \textnormal{-a.s.}
\end{align}
for every $j=0,...,J$. The Doob martingale $M^*$ of $\beta Y$ achieves the minimum and additionally is even pathwise optimal, i.e.  for every $j=0,...,J$,
\begin{align}\label{eq:optimal_martingale}
 Y_j = \theta_j^{up} (M^*) \quad P \textnormal{-a.s.}
\end{align}
 
\subsubsection*{Iterative improvement of supersolutions}

As in Section \ref{sec improvement subsolutions}, we show that an arbitrary supersolution $\bar{Y}$ can be improved in the sense that for a suitable martingale 
$M \in \mathcal{M}_D$ and the process $\theta^{up}(M)$ from \eqref{recursion up} the inequality 
\begin{equation*}
    Y_j \leq E_j [\theta_j^{up}(M)] \leq \bar{Y}_j
\end{equation*}
holds $P$-a.s. for every $j=0,...,J$. In the context of optimal stopping, \cite{CG} show that taking the Doob martingale of a given supersolution, leads to an improvement.
Theorem \ref{improvement supersolution} generalizes this idea to convex dynamic programs of the form \eqref{dynprog}:

\begin{thm}
  Let $j\in \{0,...,J-1\}$ and let $\bar{Y}$ be a supersolution to \eqref{dynprog}. Further, let $\bar{M}$ be the Doob martingale of the process $\beta \bar{Y}$.
  Then, the process $\theta^{up}(\bar{M})$  satisfies
  \begin{align}
    Y_i \leq E_i [\theta_i^{up}(\bar M)] \leq F_i (E_i [\beta_{i+1} \bar{Y}_{i+1}]) \leq \bar{Y}_i \quad P \textnormal{-a.s.} \label{improvement supersolution inequalities}
  \end{align}
  for all $i=0,...,J$.
  Moreover, if $\bar{Y}_i = Y_i$ for all $i=j+1,...,J$, then 
  \[
   \theta_j^{up}(\bar M) = Y_j \quad P \textnormal{-a.s.}
  \]
  \label{improvement supersolution}
\end{thm}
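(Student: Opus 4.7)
The plan is to mirror the backward-inductive strategy employed in the proof of Theorem~\ref{improvement subsolution}, with the subgradient inequality for convex $F_i$ playing the role that the linearization identity played there. The two outer inequalities in \eqref{improvement supersolution inequalities} are essentially automatic: the leftmost inequality $Y_i \leq E_i[\theta_i^{up}(\bar M)]$ follows because $(E_j[\theta_j^{up}(M)])_{j=0,\ldots,J}$ is a supersolution to \eqref{dynprog} for every $M \in \mathcal{M}_D$, as recalled just above the theorem, combined with Proposition~\ref{comparison} applied with $Y$ itself as a subsolution; the rightmost inequality is simply the supersolution property of $\bar Y$.

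For the middle inequality, the plan is to prove the stronger pathwise statement
\[
\theta_i^{up}(\bar M) \leq F_i(E_i[\beta_{i+1}\bar Y_{i+1}]) \quad P\textnormal{-a.s.}, \quad i=0,\ldots,J-1,
\]
from which the middle inequality of \eqref{improvement supersolution inequalities} follows by conditioning on $\mathcal{F}_i$, the right-hand side being $\mathcal{F}_i$-measurable. The proof is by backward induction on $i$, with base case $\theta_J^{up} = \xi \leq \bar Y_J$ from the terminal supersolution condition. For the inductive step, the key identity is that the Doob martingale of $\beta\bar Y$ satisfies $\beta_{i+1}\bar Y_{i+1} - \Delta \bar M_{i+1} = E_i[\beta_{i+1}\bar Y_{i+1}]$. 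Applying the subgradient inequality for the convex function $F_i$ at the point $z = \beta_{i+1}\theta_{i+1}^{up}(\bar M) - \Delta \bar M_{i+1}$ then yields, for any $r \in \partial F_i(z)$,
\[
F_i(E_i[\beta_{i+1}\bar Y_{i+1}]) - \theta_i^{up}(\bar M) \geq r^{\top}\beta_{i+1}\bigl(\bar Y_{i+1} - \theta_{i+1}^{up}(\bar M)\bigr).
\]

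The main obstacle will be showing that $r^{\top}\beta_{i+1} \geq 0$ $P$-a.s.\ for every such subgradient $r$, thereby extending Theorem~4.3 of \cite{BGS} (which ensures this only at the specific optimizer of the dual representation) to the random argument $z$ arising here. I would derive this directly from the monotonicity assumption \eqref{monotonicity}: specializing \eqref{monotonicity} to constant random variables, together with continuity of the everywhere-finite convex function $F_i$, yields that for $P$-a.e.\ $\omega$ the map $t \mapsto F_i(\omega,\beta_{i+1}(\omega)t)$ is non-decreasing, and in particular bounded above by $F_i(\omega,0)$ on $(-\infty,0]$. If $r^{\top}\beta_{i+1}$ were strictly negative on a set of positive probability, the subgradient inequality $F_i(\omega,\beta_{i+1}(\omega)t) \geq F_i(\omega,z(\omega)) + r^{\top}(\beta_{i+1}(\omega)t - z(\omega))$ would force $F_i(\omega,\beta_{i+1}(\omega)t) \to +\infty$ as $t \to -\infty$ on that set, contradicting the upper bound. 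Combining this positivity with the inductive hypothesis $\theta_{i+1}^{up}(\bar M) \leq F_{i+1}(E_{i+1}[\beta_{i+2}\bar Y_{i+2}]) \leq \bar Y_{i+1}$ (the second inequality being the supersolution property) closes the inductive step.

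For the \emph{moreover} assertion, if $\bar Y_k = Y_k$ for all $k = j+1,\ldots,J$, then by the definition of a Doob martingale the increments $\Delta \bar M_k$ coincide with those of the Doob martingale $M^{\ast}$ of $\beta Y$ for $k = j+1,\ldots,J$. Since the pathwise recursion \eqref{recursion up} started from $\theta_J^{up} = \xi = Y_J$ depends on the martingale only through those increments, the pathwise optimality \eqref{eq:optimal_martingale} yields $\theta_j^{up}(\bar M) = \theta_j^{up}(M^{\ast}) = Y_j$ $P$-a.s.
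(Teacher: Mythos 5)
Your proposal is correct, and its skeleton (outer inequalities via the generic supersolution property of $E_\cdot[\theta^{up}(M)]$ plus Proposition~\ref{comparison} and the supersolution property of $\bar Y$; backward induction on the stronger pathwise claim $\theta_i^{up}(\bar M)\leq F_i(E_i[\beta_{i+1}\bar Y_{i+1}])$; the \emph{moreover} part via coincidence of the increments of $\bar M$ and $M^*$ together with the pathwise optimality \eqref{eq:optimal_martingale}) coincides with the paper's proof. Where you differ is in the mechanism for the inductive step. The paper does not linearize: it applies monotonicity of the shifted map $z\mapsto F_i\bigl(z-(\beta_{i+1}\bar Y_{i+1}-E_i[\beta_{i+1}\bar Y_{i+1}])\bigr)$ to the comparison $\theta_{i+1}^{up}\leq \bar Y_{i+1}$, justifying that this shifted generator inherits \eqref{monotonicity} by noting that its convex conjugate has the same effective domain as $F_i^{\#}$ and invoking the positivity characterization of monotonicity in Theorem 4.3 of \cite{BGS}. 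You instead use the subgradient inequality at the random point $\beta_{i+1}\theta_{i+1}^{up}-\Delta\bar M_{i+1}$ (which is exactly the device the paper uses in Theorem~\ref{improvement subsolution} for subsolutions, there via the optimizer $\bar r$) and then prove the needed positivity $r^\top\beta_{i+1}\geq 0$ from scratch, by specializing \eqref{monotonicity} to constants, using continuity of the finite convex $F_i(\omega,\cdot)$, and deriving a contradiction from linear growth along $t\mapsto\beta_{i+1}t$ as $t\to-\infty$. This is sound: subgradients exist everywhere since $F_i(\omega,\cdot)$ is finite and convex, and no measurable selection of $r$ is needed because the argument is carried out pointwise in $\omega$ and the final inequality involves only measurable quantities; moreover, a subgradient at any point lies in the effective domain of $F_i^\#$, so what you reprove is precisely one direction of the cited equivalence. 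The trade-off is that the paper's route is shorter and leans on \cite{BGS} as a black box, while yours is more self-contained and unifies the super- and subsolution proofs under a single linearization argument.
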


\begin{proof}
  The overall strategy of proof is similar to the one of Theorem \ref{improvement subsolution}.
  At the beginning of this section, we have already shown that $(E_j [\theta_j^{up}(M)])_{j=0,...,J}$ is a supersolution for any martingale $M \in \mathcal{M}_D$, which yields the first
 inequality
  in \eqref{improvement supersolution inequalities}. The last one is due to the supersolution property of $\bar Y$.
   To show the remaining inequality, we prove again the slightly stronger assertion
  \[
   \theta_i^{up}:=\theta_i^{up}(\bar M) \leq F_i (E_i [\beta_{i+1} \bar{Y}_{i+1}])  \quad P \textnormal{-a.s.}, \ i=0,...,J-1,
  \]
  from which we obtain \eqref{improvement supersolution inequalities} by the monotonicity of the conditional expectation.
  The proof is by backward induction on $i$, with the case $i=J$ being trivial, since, by definition, we have $\theta_J^{up} = \xi \leq \bar{Y}_J$.
  Now suppose that the assertion is true for $i+1 \in \{1,...,J\}$, i.e. $\theta_{i+1}^{up} \leq \bar{Y}_{i+1}$ $P$-a.s. 
  Hence, we conclude by the definition of $\bar{M}$, the monotonicity assumption \eqref{monotonicity}, and the induction hypothesis that 
  \begin{eqnarray*}
   \theta_i^{up}  &=& F_i (\beta_{i+1} \theta_{i+1}^{up} - (\beta_{i+1} \bar{Y}_{i+1} - E_i [\beta_{i+1} \bar{Y}_{i+1}])) \\
		  &\leq& F_i (\beta_{i+1} \bar{Y}_{i+1} - (\beta_{i+1} \bar{Y}_{i+1} - E_i [\beta_{i+1} \bar{Y}_{i+1}])) = F_i (E_i [\beta_{i+1} \bar{Y}_{i+1}]). 
		    \end{eqnarray*}
Here, we  exploit that   $z\mapsto F_i (z - (\beta_{i+1} \bar{Y}_{i+1} - E_i [\beta_{i+1} \bar{Y}_{i+1}]))$ inherits the monotonicity property \eqref{monotonicity} from $F_i$
by the equivalent characterization of the monotonicity property 
via positivity in Theorem 4.3 of \cite{BGS} (with the constant full information filtration),
because its convex conjugate is given by $F_i^\#(u)+ u^\top(\beta_{i+1} \bar{Y}_{i+1} - E_i [\beta_{i+1} \bar{Y}_{i+1}])$.

To complete the proof, we assume that $\bar{Y}_i = Y_i$ for all $i=j+1,...,J$, where $j \in \{0,...,J-1\}$ is fixed. 
Then, again, the increments  $\bar{M}_{i+1}-\bar{M}_i$ coincide with those of the Doob martingale $M^*$ of $\beta Y$ for $i=j,\ldots, J-1$. 
Hence, \eqref{eq:optimal_martingale} concludes.
\end{proof}
 
 As in Section \ref{sec improvement subsolutions}, this improvement can be iterated several times. For a given supersolution $Y^{(up,0)}$ define $\theta^{(up,0)} := Y^{(up,0)}$ and
 define $\theta^{(up,k)}$ according to \eqref{recursion up} by
 \begin{align}
   \theta^{(up,k)} := \theta^{up} (M^{(k)}),\quad k\geq 1,
   \label{iteration supersolution}
 \end{align}
 where each $M^{(k)}$ is given by
 \begin{align}
  M_j^{(k)} = \sum_{i=0}^{j-1} \beta_{i+1} E_{i+1} \left[\theta_{i+1}^{(up,k-1)} \right] - E_i \left[\beta_{i+1} \theta_{i+1}^{(up,k-1)} \right], \quad j=0,...,J.
  \label{iteration supersolution martingale}
 \end{align}
Then, iterative application of Theorem  \ref{improvement supersolution} yields $E_j[\theta_j^{(up,k)}]\leq E_j[\theta_j^{(up,k-1)}]$, $P$-a.s., for every $k\geq 1$ and $j=0,\ldots, J$, and 
\[
     \theta_i^{(up,J-j)} = Y_i, \quad P \textnormal{-a.s.},
    \]
whenever $i\geq j$.  So the upper bound iteration also terminates after at most $J$ steps at the true solution $Y$.
 
 \subsubsection*{Improvement of a family of supersolutions}

  At the end of Section \ref{sec improvement subsolutions} we explained how to improve a given family of subsolutions. The same idea can be applied here in order to simultaneously improve
  a family of supersolutions $(\bar{Y}^{\{l\}})_{l \in I}$, where $I$ is a finite index set. We now consider the predictable, 
  $I$-valued process 
 \[   
    l_*(j) = \inf \left\{ l \in K(j) \ \left| \ \forall \kappa \in K(j) \ F_{j-1} \left(E_{j-1} \left[\beta_j \bar{Y}_j^{\{l\}}\right] \right) 
	    \leq F_{j-1} \left(E_{j-1} \left[\beta_j \bar{Y}_j^{\{\kappa\}}\right] \right) \right. \right\}
  \]
  for every $j=1,...,J$, where $K(j)$ is again a nondecreasing family of subsets of $I$.  Then, the process $\bar{Y}$ defined by
  \[
   \bar{Y}_j= \bar{Y}_j^{\{l_*(j)\}} {\bf 1}_{\{j>0\}}+  F_0 \left(E_0 \left[ \beta_1 \bar{Y}_1 \right] \right) {\bf 1}_{\{j=0\}}
  \]
  is, by similar arguments as in Section \ref{sec improvement subsolutions}, a supersolution to \eqref{dynprog}. Thus, by Theorem \ref{improvement supersolution}, 
  \[
     E_j \left[\theta_j^{up} (\bar{M})\right] \leq F_j \left(E_j \left[\beta_{j+1} \bar{Y}_{j+1} \right] \right) 
	  = \min_{l \in K(j+1)} F_j \left( E_j \left[\beta_{j+1} \bar{Y}_{j+1}^{\{l\}} \right] \right) \leq \min_{l \in K(j+1)} \bar{Y}_j^{\{l\}}
  \]
  $P$-a.s. for every $j=0,...,J-1$, where $\bar{M}$ denotes the Doob martingale of $\beta \bar{Y}$. Hence, in the case $K(j)=I$ for $j=1,...,J$, improving $\bar{Y}$ results again in a
  simultaneous improvement of all supersolutions $(\bar{Y}^{\{l\}})_{l \in I}$.

\section{Implementation}
 \label{sec Implementation}

In this section, we explain how to implement algorithms based on the iterative improvement approaches of Sections \ref{sec improvement subsolutions} and \ref{sec improvement supersolutions}.
In order to transform these results into implementable algorithms, one needs to construct a sub- and a supersolution as input. Moreover, the conditional expectations which appear in the
 iterative constructions of the controls in \eqref{iteration subsolution control} and the Doob martingales 
in \eqref{iteration supersolution martingale} must be approximated numerically. For the numerical approximation of the conditional expectation within the iterative improvement we apply, as in \cite{KS}, a plain Monte Carlo 
implementation. In contrast to a naive plain Monte Carlo implementation of the dynamic programming equation \eqref{dynprog} (which leads to infeasible $J$ nested layers of simulation), the number of layers 
of simulation  in the iterative improvement algorithm depends on the number of iteration steps which are performed. As we shall demonstrate in the numerical examples, two improvement steps are  feasible, when the input super- and subsolutions are available in closed form. Therefore, we focus on the construction of closed-form inputs in Section 
\ref{sec input approximation}, before we explain the somewhat standard 
nested simulation approach for the iterative improvement in Section \ref{sec improvement approach}. 

As a first step, however, we specialize to the following Markovian framework: We assume that $(B_j)_{j=0,\ldots,J}$ is an $\DR^{\mathcal{D}}$-dimensional adapted process (with $\mathcal{D}\geq D$),
such that the first $D$ components of $B_j$ are given by $\beta_{j}$ and $B_{j}$ is independent of $\F_{j-1}$, for every 
$j=1,\ldots, J$.  $X$ is supposed to be an $\DR^N$-valued Markovian process of the form
\begin{align}
  X_{j} = h_{j}(X_{j-1},B_{j}), \quad j=1,...,J,
  \label{markovian process}
 \end{align} 
for  measurable functions $h_j:\DR^N \times \DR^\mathcal{D} \rightarrow \DR^N$, starting at $X_0=x_0 \in \DR^N$.  
 This forward equation for the state process $X$ could arise, e.g., as a time 
 discretization of a stochastic differential equation. Moreover, for the generator $F_j$ of the dynamic program \eqref{dynprog} we assume existence of measurable functions $f_j: \DR^N \times \DR^D \rightarrow \DR$ satisfying $F_j (\cdot) = f_j (X_j, \cdot)$,
 i.e., $F_j$ depends on $\omega$ only through the Markovian process $X$. Then, we consider a Markovian version of the dynamic program \eqref{dynprog} in the form
 \begin{align}
   Y_j &= f_j (X_j, E_j[\beta_{j+1} Y_{j+1}]),\quad j=0,\ldots, J-1, \quad  Y_J = g(X_J),
  \label{dynprog markovian}
 \end{align}
 where $g: \DR^N \rightarrow \DR$ is measurable. In this framework, $Y_j$ is a deterministic function of $X_j$ (and, in particular, $Y_0$ is a constant). In view of \eqref{markovian process}, we obtain, for every $j=1,\ldots,J$, a measurable 
function $y_j:  \DR^N \times \DR^\mathcal{D} \rightarrow \DR$ such that $Y_j=y_j(X_{j-1},B_j)$. Denoting by $P_{B_j}$ the law of $B_j$, we can, thus, write  
$E_j [\beta_{j+1} Y_{j+1}] = z_{j} (X_j)$ with 
\begin{align*}
z_j(x)=\left(\int_{\DR^\mathcal{D}}  b_1\, y_{j+1}(x,b) \, P_{B_{j+1}}(db),\ldots, \int_{\DR^\mathcal{D}}  b_D\, y_{j+1}(x,b) \, P_{B_{j+1}}(db)\right)^\top.
\end{align*}

\subsection{Computation of the input sub- and supersolution}
\label{sec input approximation}

For the construction of the input sub- and supersolutions, we first approximate $y_j$ by a linear combination of a given set of basis functions $\eta_j^1,...,\eta_j^K: \DR^N \times \DR^\mathcal{D} \rightarrow \DR$, i.e.,
\begin{align}\label{eq:input_approximation}
 \tilde y_j(x,b)=\sum_{k=1}^K a^k_j  \eta^k_j(x,b), \quad  j=1,\ldots,J.
\end{align}
 We consider two different ways to compute the $\F_0$-measurable coefficients $a^k_j$, a variant of least-squares 
Monte Carlo, which picks up some ideas of the regression later approach of \cite{GlYu04} in the optimal stopping literature, 
and a direct martingale minimization approach which builds on the works by \cite{Belo13} and \cite{DFM} for optimal stopping.  

As a key assumption on the basis functions, we impose that the expectations
\begin{align}\label{eq:basis}
 R_{j}^k (x) :=\left(\int_{\DR^\mathcal{D}}  b_1\, \eta^k_{j+1}(x,b) \, P_{B_{j+1}}(db),\ldots, \int_{\DR^\mathcal{D}}  b_D\, \eta^k_{j+1}(x,b) \, P_{B_{j+1}}(db)\right)^\top,
\end{align}
$x\in \DR^N$, are available in closed form (or, can be computed numerically up to a `negligible' error), cp. Remark \ref{modifMB} below. Defining $\tilde Y_j=\tilde y_j(X_{j-1},B_j)$ as an 
approximation to $Y_j$, we, thus, observe that $E_j[\beta_{j+1}\tilde Y_{j+1}]=\sum_k a^k_{j+1}  R_{j}^k (X_j)$ is given in closed form as well.
Based on the input approximation $\tilde y$, we can now derive first approximations of the optimal control and of the Doob martingale $M^*$, 
from which input sub- and supersolutions can be obtained via the pathwise recursions \eqref{recursion low} and \eqref{recursion up}. To compute such a control $\tilde{r}$, we 
solve \eqref{optimal control} with $Y_j$ replaced by 
$\tilde Y_j$, i.e. the process $\tilde{r}$ is given by
\begin{align}
 \tilde{r}_j^{\top} E_j [\beta_{j+1} \tilde Y_{j+1}] - f_j^{\#} (\tilde{r}_j) = f_j (X_j, E_j [\beta_{j+1} \tilde Y_{j+1}]), 
 \label{approximate optimal control}
\end{align}
for every $j=0,...,J-1$, and belongs to $\mathcal{A}_0$ thanks to Lemma A.1 in \cite{BGS}. Here, the convex conjugate can, of course, be approximated numerically as well. 
For the first approximation of the Doob martingale $M^*$, we, again, just replace the true solution $Y$ by its approximation $\tilde Y$. Hence, a first approximation $\tilde{M}$ of $M^*$ is given by 
\begin{align}\label{approximate Doob}
 \tilde{M}_j = \sum_{i=0}^{j-1} \beta_{i+1} \tilde{Y}_{i+1} - E_i [\beta_{i+1} \tilde{Y}_{i+1}], \quad j=0,...,J.
\end{align}
Plugging $\tilde{r}$ and $\tilde{M}$ into the recursions \eqref{recursion low} and \eqref{recursion up} for $\theta^{low}$ and $\theta^{up}$, we obtain the input sub- and supersolutions 
$Y_j^{(low,0)}=E_j[\theta^{low}_j(\tilde r,\tilde M)]$ and  $Y_j^{(up,0)}=E_j[\theta^{up}_j(\tilde M)]$.
 
We can now sample  $\Lambda^{out}$ independent copies $(B_j(\lambda^{out}), j=1,\ldots,J))_{\lambda^{out}=1,...,\Lambda^{out}}$ of $B$, to which we refer as `outer' paths. 
Then, we can compute the pathwise recursions for $\theta^{low}(\tilde r,\tilde M)$ and $\theta^{up}(\tilde M)$ along each of these outer paths and denote them by 
$\theta_j^{(low,0)}(\lambda^{out})$ and $\theta_j^{(up,0)}(\lambda^{out})$, $j=0,\ldots,J$, respectively. 
 Applying the plain Monte Carlo estimator
 \begin{align}
   \hat{Y}_0^{(up,0)} := \frac{1}{\Lambda^{out}} \sum_{\lambda^{out}=1}^{\Lambda^{out}} \theta_0^{(up,0)}(\lambda^{out})     \label{estimator Y0}
 \end{align}
for $E_0[\theta^{up}_0(\tilde M)]$ and the associated empirical standard deviation, one can compute an (asymptotic) confidence interval for $E_0[\theta^{up}_0(\tilde M)]$ and 
thus an upper confidence bound on $Y_0\leq E_0[\theta^{up}_0(\tilde M)] $, 
 see Section 1.1.3 of \cite{glasserman2003monte}. Analogously, from $(\theta_0^{(low,0)}(\lambda^{out}))_{\lambda^{out}=1,\ldots,\Lambda^{out}}$, a lower confidence bound can be constructed, 
and, combining both bounds, we end up with an asymptotic confidence interval. We emphasize that $Y_0$ is a deterministic real number, but the construction of the confidence interval 
is conditional on any set of sample paths which might be used to pre-compute the coefficients $a_j$ in  \eqref{eq:input_approximation} and which we think of as being included in $\F_0$.

 When such a confidence interval is not yet sufficiently tight for the application under consideration, one can run the 
iterative improvement algorithm described in Section \ref{sec improvement approach} below. We shall first, however, discuss two ways to obtain the coefficients for the input approximation \eqref{eq:input_approximation}.

 \subsubsection*{Least-squares Monte Carlo approach}
The idea of least-squares Monte Carlo (LSMC) is to approximate the conditional expectation in \eqref{dynprog markovian} by an orthogonal projection onto a set of basis functions via regression, i.e,
one computes
 \begin{align*}
   \tilde Y_j &= f_j (X_j, \mathcal{P}_j[\beta_{j+1} \tilde Y_{j+1}]),\quad j=0,\ldots, J-1, \quad  \tilde Y_J = g(X_J),
 \end{align*}
as an approximation to $Y$, where $\mathcal{P}_j$ denotes the empirical regression (given a set of sample paths) on a pre-specified basis. Note that one actually has to calculate $D$ empirical regressions in each time step, since the stochastic weight $\beta$ is
$\mathbb{R}^D$-valued, and that the expression $\beta_{j+1} \tilde Y_{j+1}$ may suffer from a large variance, e.g., in the BSDE case \eqref{eq:BSDE}, where the variance of 
the Malliavin Monte Carlo weights $\beta$ for the first space derivative explodes as the time 
discretization becomes finer and finer. With our standing assumption \eqref{eq:basis} on the basis functions we can, instead, implement the following single-regression variant of least-squares 
Monte Carlo:
\begin{align*}
   \tilde Y_j &= \mathcal{P}_j\left[f_j (X_j, E_j[\beta_{j+1} \tilde Y_{j+1}])\right],\quad j=0,\ldots, J-1, \quad  \tilde Y_J = \mathcal{P}_J[g(X_J)],
 \end{align*}
as (inductively) $\tilde Y_{j+1}$ is a linear combination of $(\eta_{j+1}^k(X_j,B_{j+1}))_{k=1,\ldots,K}$ and, thus, the conditional expectation inside $f_j$ is available in closed form. This
idea to `regress later' originates in \cite{GlYu04} for optimal stopping and was extended to the time discretization of BSDEs in \cite{BS12}, where a tremendous variance reduction effect is observed 
in the numerical examples.

To be more formal, we assume that $\Lambda^{reg}$ independent copies of $B$ to which we refer as 'regression paths' are given. The trajectories of $\beta$ and of the Markovian process $X$
along the $\lambda$th regression path are denoted by $\beta(\lambda)$ and $X(\lambda)$, $\lambda=1,\ldots,\Lambda^{reg}$. 
For the initialization of our algorithm, we require an approximation of the terminal condition $g_J(X_J)=y_J(X_{J-1},B_J)$ in terms of basis functions. Applying a standard regression approach, we compute 
$\DR^K$-valued coefficients $a_J = (a_J^1,...,a_J^K)$ via
\[
 a_J = \argmin_{a \in \DR^K} \frac{1}{\Lambda^{reg}} \sum_{\lambda=1}^{\Lambda^{reg}} \left( g (X_J(\lambda)) - \sum_{k=1}^K a^k \eta_J^k (X_{J-1}(\lambda),B_J(\lambda)) \right)^2
\]
and obtain $\tilde{y}_J(x,b)=\sum_{k=1}^K a_{J}^k \eta_{J}^k (x,b)$  as an approximation of $y_J(x,b)=g(h(x,b))$.
Now, assume that an approximation $\tilde{y}_{j+1}(x,b)$ in terms of the basis functions has already been computed, i.e.
$\tilde{y}_{j+1} (x,b) = \sum_{k=1}^K a_{j+1}^k \eta_{j+1}^k (x,b)$,
with $\DR^K$-valued coefficients $a_{j+1}=(a_{j+1}^1,...,a_{j+1}^K)$. Then, by \eqref{eq:basis},
\[
f_j \left(X_j, E_j \left[\beta_{j+1}  \sum_{k=1}^K a_{j+1}^k \eta_{j+1}^k (X_j,B_{j+1}) \right] \right)=
f_j \left(X_j, \sum_{k=1}^K a^k_{j+1}  R_{j}^k (X_j) \right),
\]
(where we, of course, formally, perform an initial enlargement of the filtration by the regression paths, which are assumed to be independent of $(X,\beta)$). Projecting the right-hand side empirically 
on the basis functions $(\eta^1_j,\ldots,\eta^K_j)$ leads to
\begin{eqnarray*}
 a_j &=& \argmin_{a \in \DR^K} \frac{1}{\Lambda^{reg}} \sum_{\lambda=1}^{\Lambda^{reg}} \Bigg( f_j \big(X_j(\lambda), \sum_{k=1}^K a^k_{j+1}  R_{j}^k (X_j(\lambda)) \big)   \left.- \sum_{k=1}^K a^k \eta_j^k (X_{j-1}(\lambda),B_j(\lambda)) \right)^2,
\end{eqnarray*}
and, thus, we obtain $\tilde y_j(x,b)= \sum_{k=1}^K a_{j}^k \eta_{j}^k (x,b)$ as an approximation to $y_j$. 

\begin{rem}\label{modifMB}
In contrast to \cite{GlYu04} and \cite{BS12}, we merely require in \eqref{eq:basis} that conditional expectations are available explicitly one step ahead, 
while \cite{GlYu04} and \cite{BS12} both additionally assume that the basis functions form martingales, i.e., $E_j[\eta^k_{j+1}(X_{j}, B_{j+1})]=\eta^k_j(X_{j-1}, B_j)$. 
 One can exploit the additional flexibility in the following way: 
Suppose that each basis function $\eta_j^k$ can be written in the product form $\eta_j^k(x,b)=\eta_j^{k,1}(x)\eta_j^{k,2}(h_j(x,b))$ and assume that $E [ \beta_j \eta_j^{k,2} (h_j(x, B_j))]$ is 
available in closed form. Then, the expression in \eqref{eq:basis}
is also available in closed form, as required. In particular, while the choice of $\eta_j^{k,2}$ is restricted to functions where explicit computations are possible, we are completely flexible in capturing a 
more complex dependence on the process $X$ through the factor $\eta_j^{k,1}(x)$. In the numerical example of Section \ref{sec funding}, we illustrate such a choice of basis functions. 
\end{rem}

 \subsubsection*{Martingale minimization approach}

It has been observed in the context of optimal stopping and in the BSDE examples in \cite{BSZ} and \cite{BGS} that the construction of tight supersolutions can be significantly more 
difficult than the construction of tight subsolutions. The idea of the martingale minimization approach is thus to compute the coefficients in \eqref{eq:input_approximation} in such a way 
that the upper bound implied by the input supersolution is minimized.  In the context of optimal stopping, similar ideas have been developed in \cite{DFM} and \cite{Belo13}. In contrast to 
 least-squares Monte Carlo, the optimization is now global 
and so the coefficients in \eqref{eq:input_approximation} do not depend on the time index $j$.

In view of \eqref{eq:basis} each basis function defines a martingale via $M_0^{\{k\}}=0$ and
\[
 M_j^{\{k\}} - M_{j-1}^{\{k\}} = \beta_j \eta_j^k (X_{j-1},B_j) - R_{j-1}^k (X_{j-1}), \quad k=1,...,K.
\]
Writing,
\begin{equation}
  M_j^{a} = \sum_{k=1}^K a^{k} M_j^{\{k\}}, \quad j=0,...,J,
  \label{martingale minimization}
 \end{equation}
where $a=(a^{1},...,a^{K}) \in \DR^K$, we wish to choose a coefficient vector $a^*$, for which $E_0[\theta^{up}_0(M^a)]$ becomes minimal. Taking the pathwise optimality of the optimal martingale $M^*$ in 
\eqref{eq:optimal_martingale}
into account and following the approach analyzed in \cite{Belo13} for optimal stopping, we add a standard deviation penalty to this minimization problem.   
To make the approach implementable, the expectation and standard deviation need to be replaced by empirical estimators over sample paths. To this end, 
we sample $\Lambda^{mini}$ independent copies of $B$ (which we refer to as `minimization paths') and denote the evaluation of $\beta$ and $X$ along the $\lambda$th minimization path by
$\beta(\lambda)$ and $X(\lambda)$.
We then solve for
\begin{align}
 a^* = \argmin_{a \in \DR^K} \left(\hat{E}_0[\theta_0^{up}(M^a)] + \gamma \sqrt{\frac{1}{\Lambda^{mini}-1} \sum_{\lambda=1}^{\Lambda^{mini}} \left(\theta_0^{up}(M^a; \lambda) - \hat{E}[\theta_0^{up}(M^a)] \right)^2 }\right),
 \label{minimization empirical}
\end{align}
where $\gamma\geq 0$ is fixed,
\[
 \hat{E}_0[\theta_0^{up}(M^a)] = \frac{1}{\Lambda^{mini}} \sum_{\lambda=1}^{\Lambda^{mini}} \theta_0^{up} (M^a; \lambda),
\]
and $\theta^{up}(M^a; \lambda)$ is sampled according to \eqref{recursion up} along the $\lambda$th minimization path. 
In our numerical examples, we use the {\sc Matlab} implementation of the Nelder-Mead simplex algorithm to search for $a^*$.
An approximation to $y_j$ as in \eqref{eq:input_approximation} 
is then given by
$$
\tilde{y}_j (x,b) = \sum_{k=1}^K a^{k,*} \eta_j^k (x,b), \quad j=1,...,J.
$$

\begin{rem}\label{rem:training}
 The minimization approach requires the choice of the parameter $\gamma$. In our numerical results presented in Section \ref{sec funding}, we apply a ``training and testing'' approach to tune this 
 parameter. To this end, we choose a set $\{\gamma_1,...,\gamma_L\}$, $L \in \mathbb{N}$, of parameters. For each $\gamma_l$, $l=1,...,L$, we compute a vector of coefficients 
 $a^*_{\gamma_l} \in \DR^K$ according to \eqref{minimization empirical} along the minimization paths $\Lambda^{mini}$. If vectors $a^*_{\gamma_1},...,a^*_{\gamma_L}$ are computed, we sample
 a new set of $\Lambda^{test}$ test paths (independent copies of $B$ which are also independent of the minimization paths). The parameter $\gamma$ is obtained by taking the  
 $\gamma_l$ such that $a^*_{\gamma_l}$  minimizes the expression in  brackets on the right hand side of \eqref{minimization empirical} along the test paths over the set $\{ a^*_{\gamma_1},...,a^*_{\gamma_L}\}$.
We note that in our experience the method's practical performance 
 is not particularly sensitive to the choice of $\gamma$.
\end{rem}

 \subsection{Iterative improvement algorithm}
 \label{sec improvement approach}

We now assume that we are given input super- and subsolutions of the form  $Y^{(up,0)}_j=E_j[\theta^{up}(\tilde M)]$ and $Y^{(low,0)}_j=E_j[\theta^{low}(\tilde r,\tilde M)]$
such that the control $\tilde r \in \mathcal{A}_0$ and the martingale $\tilde M \in \mathcal{M}_D$ can be evaluated in closed form along a given path $B$, cp. the constructions 
in Section \ref{sec input approximation}. In order to compute the first iteration $\theta^{(low,1)}$ in \eqref{iteration subsolution} and $\theta^{(up,1)}$ in \eqref{iteration supersolution} we need to approximate the conditional 
expectations $E_j[\beta_{j+1} \theta^{low}_{j+1}(\tilde r,\tilde M)]$,  $E_j[\beta_{j+1} \theta^{up}_{j+1}(\tilde M)]$, and $E_{j+1}[ \theta^{up}_{j+1}(\tilde M)]$. In the following, we focus on 
the supersolution case, but note that the subsolution case is analogous.

In our plain Monte Carlo implementation, we first sample $\Lambda^{out}$ independent copies $B(\lambda^{out})$, $\lambda^{out}=1,\ldots \Lambda^{out}$, of $B$. Moreover, for every 
time step $j$ and outer path $B(\lambda^{out})$, we generate a new sample of independent copies $(B_i(\lambda^{mid},j))_{i\geq j+1}$, $\lambda^{mid}=1,\ldots \Lambda^{mid}$, of $(B_i)_{i\geq j+1}$.
We denote by $B(\lambda^{out},\lambda^{mid}, j)$ the path given by $(B_1(\lambda^{out}), \ldots, B_j(\lambda^{out}), B_{j+1}(\lambda^{mid},j), \ldots, B_{J}(\lambda^{mid},j))$, which switches 
from a given outer path to the corresponding middle path at time $j+1$. Similarly to the notation introduced before, we write $\beta(\lambda^{out},\lambda^{mid}, j)$ and 
$\theta^{(up,0)}(\lambda^{out},\lambda^{mid}, j)$ for the trajectories of $\beta$ and   $\theta^{up}(\tilde M)$ along the path $B(\lambda^{out},\lambda^{mid}, j)$. Along each outer path, we approximate 
the martingale $M^{(1)}$ in \eqref{iteration supersolution martingale} with increment 
 \[
   M_{j+1}^{(1)}-M_j^{(1)} = \beta_{j+1} E_{j+1} [\theta_{j+1}^{(up,0)}] - E_j [\beta_{j+1} \theta_{j+1}^{(up,0)}], \quad j=0,...,J-1,
 \]
by the plain Monte Carlo estimator
\[
  \tilde{M}_{j+1}^{(1)}(\lambda^{out}) - \tilde{M}_j^{(1)}(\lambda^{out}) = \beta_{j+1}(\lambda^{out}) \hat{E}_{j+1} [\theta_{j+1}^{(up,0)}](\lambda^{out}) - \hat{E}_j [\beta_{j+1}\theta_{j+1}^{(up,0)}](\lambda^{out})
 \]
where
 \begin{align}
   \hat{E}_j [\theta_j^{(up,0)}](\lambda^{out}) &:= \frac{1}{\Lambda^{mid}} \sum_{\lambda^{mid}=1}^{\Lambda^{mid}} \theta_j^{(up,0)} (\lambda^{out},\lambda^{mid},j) \nonumber \\ 
   \hat{E}_j [\beta_{j+1}\theta_{j+1}^{(up,0)}](\lambda^{out}) &:= \frac{1}{\Lambda^{mid}} \sum_{\lambda^{mid}=1}^{\Lambda^{mid}} \beta_{j+1} (\lambda^{out},\lambda^{mid},j) \theta_{j+1}^{(up,0)} (\lambda^{out},\lambda^{mid},j). \label{estimator cond exp} 
 \end{align}
Standard calculations show that $\tilde M^{(1)}$ is also a martingale when the filtration is suitably enlarged by the middle paths. We now write $\theta^{(up,1)}(\lambda^{out})$ for the realization 
of $\theta^{up}(\tilde M^{(1)})$ along the $\lambda^{out}$th outer path. Proceeding as in \eqref{estimator Y0} ff., we can compute a new upper confidence bound for $Y_0$
based on $(\theta^{(up,1)}(\lambda^{out}))_{\lambda^{out}=1,\ldots, \Lambda^{out}}$. Since $\tilde M^{(1)}$ converges 
to $M^{(1)}$ (along each outer path) as the number of middle paths converges to infinity, and since $E_0[\theta^{up}(M^{(1)})]\leq E_0[ \theta^{up}(\tilde M)]$ by Theorem \ref{improvement supersolution},
the corresponding upper bound is typically tighter than the one constructed from $(\theta^{(up,0)}(\lambda^{out}))_{\lambda^{out}=1,\ldots, \Lambda^{out}}$, when the number of middle paths is sufficiently large.

If one wishes to compute a second iteration step (e.g., because the once improved confidence interval is still not tight enough), one can repeat this procedure with the only difference 
that we cannot assume  the input martingale (which now is $\tilde M^{(1)}$) to be available in closed form along a given path. Its evaluation actually requires one layer of nested 
simulation as described above. However, in the next iteration step $\tilde M^{(1)}$ must be evaluated along middle paths and not along outer paths, and so the sampling of a third layer 
of $\Lambda^{in}$ `inner paths' is required. We do not get into any more details of the straightforward implementation, but note that, analogously, a third layer of simulation must already be 
sampled
in the first iteration step, when the input martingale $\tilde M$ is not available in closed form (e.g., when we drop assumption \eqref{eq:basis}). 

In order to reduce the number of middle paths (in the first iteration step) and inner paths (in the second iteration step), we suggest to apply control variates in the plain Monte Carlo estimation 
\eqref{estimator cond exp}  of 
the martingale increments based on the closed form expression for $E_j[ \theta^{up}_{j+1}(\tilde M)]$ and $E_j[\beta_{j+1} \theta^{up}_{j+1}(\tilde M)]$. In our actual implementation, we proceed as 
described at the end of  Section 4.1 in \cite{BSZ}.

In principle, the algorithm can be further iterated, but each iteration step adds an additional layer of simulations. So, for practical reasons, we recommend not to run the algorithm
with more than three layers of simulations, but rather to put more effort into the construction of the input approximations, when the confidence interval is still not tight enough. In our numerical 
test example below, very satisfactory 95\% confidence bounds can be obtained with two iteration steps, even when the input approximation $\tilde y$ in \eqref{eq:input_approximation} is pre-computed by 
the martingale minimization approach with a single constant basis function.

\section{Numerical example}
\label{sec funding}
 
 In this section, we apply our approach to the problem of pricing a European option under funding constraints, i.e., under different interest rates for borrowing and lending. In the finance 
 literature, this problem goes back to \cite{bergman1995option}. \cite{laurent2014overview} emphasizes the relevance of such models in the light of the recent 
 financial crisis.  The model is also prominent example in the literature on backward stochastic differential equations starting with  \cite{KPQ} and a well-established numerical test case 
 \cite{gobet2005regression, LGW, BS12, BSZ}. We begin by setting up the problem and explaining how it fits into our 
 framework. Then, we present our numerical results. For the computation of input approximations we present different approaches, which incorporate a priori knowledge about the problem to a varying 
 extent. The upper and lower bounds as well as the corresponding improvements are computed relying on the methodology of Section \ref{sec Implementation}, including the use of control variates.

 \subsubsection*{Pricing under funding risk}
 
 Let $0=t_0 < t_1 <...<t_J=T$ be an equidistant discretization of the interval $[0,T]$. There are two riskless interest rates $R^l < R^b \in \DR$ for lending
 respectively borrowing and $N$ risky assets given by geometric Brownian motions $X_1,...,X_N$ with dynamics 
 \[
  X_{n,j} = x_{n,0} \exp \left\{ \left( \mu - \frac{1}{2} \sum_{l=1}^N \sigma_{n,l}^2 \right) t_j + \sum_{l=1}^N \sigma_{n,l} W_{l,t_j} \right\}, \quad n=1,...,N,
 \]
 at $t_j$, for $j=0,...,J$. Here, $x_{n,0},\mu \in \DR$, $\sigma$ is an invertible $N \times N$-matrix with entries in $\DR$ and $W_1,...,W_N$ are independent Brownian motions. 
 We consider the problem of pricing a European
 option on the assets $X_1,...,X_N$ with maturity $T$ and payoff $g(X_{1,J},...,X_{N,J})$. When $g$ satisfies a polynomial growth condition, the option payoff belongs to $\lp_J (\DR)$. Applying the 
 discretization scheme proposed in \cite{FTW} to equation (1.11) in \cite{KPQ}, the value $Y$ of the option on the time grid $\{t_0,...,t_J \}$ is given by
 \begin{align}
    \label{dyn prog funding} 
  Y_j &= (1-R^l \Delta ) E_j [Y_{j+1}] - (\mu-R^l) Z_j^{\top} \sigma^{-1} \textbf{1} \Delta + (R^b-R^l) \Delta (E_j [Y_{j+1}] - Z_j^{\top} \sigma^{-1} \textbf{1})_-, 
  \end{align}
with terminal condition $Y_J = g(X_{1,J},...,X_{N,J})$.
 Here, $\Delta := t_j - t_{j-1}$ for $j=1,...,J$, $\textbf{1} \in \DR^N$ is a vector consisting of ones, and $(x)_- := \max \{-x,0\}$ for $x \in \DR$. 
 Moreover, the random vector $Z_j$ is given by 
 \[
  Z_{n,j} := E_j \left[ \frac{\Delta W_{n,j+1}}{\Delta} Y_{j+1} \right], \quad n=1,...,N,
 \]
 where $E_j [\cdot]$ denotes the conditional expectation with respect to the information generated by the multivariate Brownian motion up to time $t_j$, and $\Delta W_{n,j+1} := W_{n,t_{j+1}} - W_{n,t_j}$. The term $Z_j^{\top} \sigma^{-1} \textbf{1}$ in 
 \eqref{dyn prog funding} represents the overall position in the risky assets in the hedging portfolio at time $t_j$. Therefore, $E_j [Y_{j+1}] - Z_j^{\top} \sigma^{-1} \textbf{1}$ is an approximation
 of the position in the bank account at time $t_j$. The sign of this expression determines which interest rate is applicable.
  By taking the function $F_j: \DR^D \rightarrow \DR$, $D=N+1$, given by
 \[
  F_j (z) = (1-R^l \Delta) z_0 - (\mu-R^l) z_{(-0)}^{\top} \sigma^{-1} \textbf{1} \Delta + (R^b-R^l) \Delta \left(z_0 - z_{(-0)}^{\top} \sigma^{-1} \textbf{1} \right)_-, 
 \]
 where $z_{(-0)} := (z_1,...,z_N)$, and setting
 \[
  B_{j+1} = \beta_{j+1} = \left( 1, \frac{p_C \left(\Delta W_{1,j+1} \right)}{\Delta}, ..., \frac{p_C \left(\Delta W_{N,j+1} \right)}{\Delta} \right)^{\top}, \quad j=0,...,J-1,
 \]
 we observe that the recursion \eqref{dyn prog funding} fits into our framework. Here, $p_C$ denotes a truncation function, i.e. $p_C (x) = -C \vee x \wedge C$ for $C \in \DR_+$. 
 Notice that the truncation of $W_{t_{j+1}}-W_{t_j} \sim \mathcal{N}(0,\Delta)$ at $C$ becomes arbitrarily mild as $\Delta$ gets small.
 Truncation is required, because the increments of the Brownian motions are unbounded and, thus, the monotonicity assumption \eqref{monotonicity} might be violated. 
 A sufficient condition for \eqref{monotonicity} to hold is then
\begin{align}
\Delta \cdot \max\{|R^l|,|R^b|\}+C\cdot \max\{|\mu-R^l|,|\mu-R^b|\} \cdot \sum_{d=1}^D\left|\sum_{l=1}^D (\sigma^{-1})_{d,l}\right| \leq 1,
\label{truncation}
\end{align}
see \cite{LGW} for an analysis of this type of truncation error.

Recall that our algorithm requires the convex conjugate $F_j^\#$ of $F_j$ when running the pathwise recursion formula for  $\theta^{low}$ and it also requires to solve for the optimality condition \eqref{iteration subsolution control} in the iteration 
for the lower bounds. As $F$ is piecewise linear, the convex conjugate is straightforward to compute and equals $F^\#_j \equiv 0$ on its effective domain. Moreover, with the  function 
$u: \DR \rightarrow \DR^{N+1}$ given by
  \[
   u^{(0)}(s) = (1-s \Delta) \quad \textnormal{ and } \quad u^{(n)}(s) = -(\mu-s)\Delta \sum_{l=1}^N \left(\sigma^{-1}\right)_{n,l},\quad n=1,\ldots,N,
  \]
the effective domain of the convex conjugate is  $D_{F^{\#}}^{(j,\omega)}=\{u(R)|R\in [R^l,R^b]\}$. Finally, for every $z=(z_0,\ldots,z_N)\in \DR^{N+1}$, 
 \begin{align*}
   r(z) = 
   \begin{cases}
    u(R^l), & z_0 \geq (z_1,...,z_N) \sigma^{-1} \textbf{1} \\
    u(R^b), & z_0 < (z_1,...,z_N) \sigma^{-1} \textbf{1} 
   \end{cases}
  \end{align*}
solves $r(z)^\top z=F(z)$, compare \eqref{iteration subsolution control}.

\subsubsection*{Benchmark product}

For our numerical experiments, we consider the example discussed in \cite{BSZ}, but add a non-trivial correlation structure to the problem. This example is
 a multidimensional version of an example going back to \cite{gobet2005regression}. We compute upper and lower bounds on 
the price of a European call-spread option with strikes $K_1$ and $K_2$ on the maximum of five assets, i.e.,
\[
 g(x_1,...,x_5) = \left( \max_{d=1,...,5} x_d - K_1 \right)_+ - 2\left( \max_{d=1,...,5} x_d - K_2 \right)_+, \quad x \in \DR^5.
\]
The maturity $T$ is set to three months, i.e. $T=0.25$, and the strikes are $K_1=95$ and $K_2=115$. The interest rates $R^l$ and $R^b$
are $1\%$ and $6\%$. For the geometric Brownian motions $X_1,...,X_5$ we take $x_{d,0}=100$, $d=1,...,5$, as starting value and choose the drift $\mu$ to be $0.05$. In contrast 
to \cite{BSZ} we do not assume that $X_1,...,X_5$ are independent and consider the diffusion matrix $\sigma$ given by
\[
 \sigma = \tilde{\sigma} \cdot
 \begin{pmatrix}
  1 & 0 & 0 & 0 & 0 \\
  \rho & \sqrt{1-\rho^2} & 0 & 0 & 0 \\
  \rho & 0 & \sqrt{1-\rho^2} & 0 & 0 \\
  \rho & 0 & 0 & \sqrt{1-\rho^2} & 0 \\
  \rho & 0 & 0 & 0 & \sqrt{1-\rho^2} \\
 \end{pmatrix},
\]
where $\tilde{\sigma}=0.2$. In our numerical experiments below, the correlation parameter $\rho$ varies in the interval $[-0.3,0.3]$ and the time discretization $J$ takes values in $\{20, 30, 40\}$. 
With this choice of parameters, we observe that 
 \eqref{truncation} holds with $C= 0.77$ at the roughest time discretization level $J=20$. Truncating the Brownian 
increments with standard deviation $\sqrt{\Delta} \approx 0.112$ at 0.77 is the same as truncating a standard normal random variable at 6.88, corresponding to truncating a probability mass of 
$3\cdot10^{-12}$ in both tails.

\subsubsection*{Generic minimization algorithm}

For the construction of the input approximation, we first run the martingale minimization algorithm with the single and completely generic basis function  
$\eta_j^1 (x,b) := 1$, i.e., we initially approximate $Y_j$ by a constant and the $Z_{n,j}$ by zero, $n=1,\ldots,N$. Then, in 
 the minimization approach presented in Section \ref{sec input approximation} we have a single 6-dimensional martingale $M^{\{1\}}$ given by $\tilde{M}_{0,j+1}^{\{1\}} - \tilde{M}_{0,j}^{\{1\}} = 0$ 
and
\[
 \tilde{M}_{d,j+1}^{\{1\}} - \tilde{M}_{d,j}^{\{1\}} = \beta_{d,j+1} - E_j [\beta_{d,j+1}] = \frac{p_C \left(\Delta W_{d,j+1} \right)}{\Delta}
\]
for $d=1,...,5$. In order to compute the $\mathbb{R}$-valued coefficient $a^*$, and, hence, the constant approximation $\tilde y_j(x,b)=a^*$ to $y_j$, we implement the `training and testing' approach of 
Remark \ref{rem:training}  with $\Lambda^{mini} = \Lambda^{test} = 1000$ paths and $\{\gamma_1,...,\gamma_{21}\} = \{0, \ 0.025,..., 0.5 \}$. We 
find that $a^*$, as an approximation of $Y_0$, ranges between 16 and 17.5 for our different choices of $J$ and $\rho$, and as $a^*>0$, the 
input subsolution $Y^{(low,0)}$ 
is constructed from the constant control $u(R^l)$. 
For the computation of upper and lower bounds with up to two iterative improvements, we take $\Lambda^{out}=1000$ outer paths, $\Lambda^{mid}=200$ middle paths and $\Lambda^{in}=50$ inner paths. 
The resulting estimators for the upper and lower bounds
from the $k$-th improvement are denoted by $\hat{Y}_0^{(up,k),a}$ and $\hat{Y}_0^{(low,k),a}$. For comparison, we also state the upper bound estimator  $\hat{Y}_0^{(up,0),0}$ which is computed by choosing $a=0$,
i.e., by setting all martingale increments to zero. 

Table \ref{table generic} presents upper and lower bounds for two different choices of 
$\rho$, namely $\rho=0.3$ and $\rho=-0.3$.

\begin{table}[htbp]
 \centering
 \begin{tabular}{|c||c|c|c||c|c|c|}
 \hline  
 $\rho$ & \multicolumn{3}{|c||}{$0.3$} & \multicolumn{3}{|c|}{$-0.3$}\\
 \hline 
 $J$ & 20 & 30 & 40 & 20 & 30 & 40 \\
 \hline \vspace{-11pt} & & & & & &\\
 $\hat{Y}_0^{(up,0),0}$ & $\underset{(0.2243)}{18.9637}$ & $\underset{(0.2444)}{20.5682}$ & $\underset{(0.2720)}{22.1942}$ & $\underset{(0.2736)}{26.1759}$ & $\underset{(0.3217)}{29.5829}$ & $\underset{(0.3424)}{34.2500}$ \\
 \hline \vspace{-11pt} & & & & & &\\
 $\hat{Y}_0^{(up,0),a^*}$ & $\underset{(0.1405)}{14.4278}$ & $\underset{(0.1330)}{14.5533}$ & $\underset{(0.1343)}{14.7838}$ & $\underset{(0.1081)}{15.9557}$ & $\underset{(0.1025)}{16.0660}$ & $\underset{(0.0948)}{16.5631}$  \\
 \hline \vspace{-11pt} & & & & & &\\
 $\hat{Y}_0^{(up,1),a^*}$ & $\underset{(0.0129)}{13.1430}$ & $\underset{(0.0133)}{13.2626}$ & $\underset{(0.0154)}{13.3451}$ & $\underset{(0.0127)}{14.5063}$ & $\underset{(0.0148)}{14.6878}$ & $\underset{(0.0136)}{14.9476}$  \\
 \hline \vspace{-11pt} & & & & & &\\
 $\hat{Y}_0^{(up,2),a^*}$ & $\underset{(0.0127)}{13.0461}$ & $\underset{(0.0137)}{13.1088}$ & $\underset{(0.0139)}{13.0919}$ & $\underset{(0.0107)}{14.2047}$ & $\underset{(0.0108)}{14.2452}$ & $\underset{(0.0102)}{14.3340}$  \\
 \hline \vspace{-11pt} & & & & & &\\
 $\hat{Y}_0^{(low,0),a^*}$ & $\underset{(0.0231)}{12.6157}$ & $\underset{(0.0253)}{12.6281}$ & $\underset{(0.0307)}{12.5792}$ & $\underset{(0.0289)}{13.7919}$ & $\underset{(0.0366)}{13.7291}$ & $\underset{(0.0368)}{13.8283}$ \\
 \hline \vspace{-11pt} & & & & & &\\
 $\hat{Y}_0^{(low,1),a^*}$ & $\underset{(0.0139)}{12.9915}$ & $\underset{(0.0150)}{13.0063}$ & $\underset{(0.0184)}{12.9703}$ & $\underset{(0.0180)}{14.0492}$ & $\underset{(0.0227)}{14.0000}$ & $\underset{(0.0233)}{14.0498}$  \\
 \hline
 \end{tabular}
 \caption{Upper and lower bounds based on the generic minimization algorithm for different time discretizations. Standard deviations are given in brackets.}
 \label{table generic}
\end{table}
We first observe that the upper bound is very sensitive with respect to the input martingale. Even optimizing a very crude constant approximation for $Y$ has a huge impact, and, e.g., leads to
a half as large upper bound for $J=40$ time steps in the negative correlation case compared to the upper bound computed from the zero martingale $\hat{Y}_0^{(up,0),0}$. Nonetheless, the relative width
of the 95\% confidence interval based on the optimal constant approximation is still more than 20\% for 40 time steps in the positive correlation case and even larger in the negative correlation case. 
Improving 
upper and lower confidence bound once, shrinks the 95\% confidence interval to a quite acceptable relative width of less than 3.5\% in the positive correlation case, while a second iterative 
improvement of the upper bound leads to    
relative width of less than 1.5\%. The negative correlation apparently makes the problem harder to solve numerically. But, still, after two iteration steps for the upper bound and one iteration step for the lower 
bound we end up with a 95\% confidence interval of a relative width of less than 2.5\%.
We also observe a significant decrease in the empirical standard deviations of the upper bound estimators through the improvement steps,  
as expected since the martingales approach the pathwise optimal Doob martingale of $\beta Y$, cp. \eqref{eq:optimal_martingale}.
 
Taking into account that no problem-specific information was used to construct the above confidence intervals in a five-dimensional 
problem with non-smooth coefficients and non-trivial correlation 
structure, we believe that the numerical results  are rather striking. We note, however, that the second iteration step increases the computational costs by a factor of $\Lambda^{in}\cdot (J/3)$ (e.g.,
a factor of 667 in our setting for $J=40$ time steps)
compared to a single improvement step. Thus, we next explore to what extent the results can be improved by putting more effort into the construction of the input approximation.

\subsubsection*{Non-generic minimization and LSMC algorithms}

Following ideas of \cite{AB04} for the pricing of Bermudan options on the maximum of several assets, we now incorporate information about option prices 
on the largest and second-largest asset into the function basis.
To this end, we define the two adapted processes $d^{(1)}$ and $d^{(2)}$ by
\begin{align*}
 d_j^{(1)} &:= \inf \left\{ d \in \{1,...,5\} \left| X_{d,j} \geq X_{n,j} \ \forall n=1,...,5 \right. \right\} \\
 d_j^{(2)} &:= \inf \left\{ d \in \{1,...,5\}\setminus\{d_j^{(1)}\} \left| X_{d,j} \geq X_{n,j} \ \forall n \in \{1,...,5\}\setminus\{d_j^{(1)}\} \right. \right\} 
\end{align*}
for $j=0,...,J$. Hence, $d_j^{(1)}$ and $d_j^{(2)}$ indicate the largest respectively second-largest asset at time $t_j$. In particular, they can be viewed as functions of $X_j$. Based on this, we 
define the following functions which serve as a basis for our approximations of $Y$:
\begin{align*}
 \eta_j^1 (X_{j-1},X_j) &:= 1, \quad
 \eta_j^{\iota+1} (X_{j-1},X_j) := \sum_{d=1}^5 X_{d,j} 1_{\{d_{j-1}^{(\iota)} = d \}},\; \iota=1,2, \\
  \eta_j^{\iota+3} (X_{j-1},X_j) &:= \sum_{d=1}^5 E \left[ \left. \left( X_{d,J} - K_1 \right)_+ - 2\left( X_{d,J} - K_2 \right)_+  \right|X_{d,j}\right] 1_{\{d_{j-1}^{(\iota)} = d \}} ,\; \iota=1,2, \\
 \eta_j^6 (X_{j-1},X_j) &:= \sum_{d=1}^5 E \left[ \left. \left( X_{d,J} - K_2 \right)_+ \right|X_{d,j}\right] 1_{\{d_{j-1}^{(1)} = d \}}. 
\end{align*}
For $j=0$, we replace $1_{\{d_{j-1}^{(\iota)} = d \}}$ by $1_{\{d_j^{(\iota)} = d \}}$, $\iota = 1,2$. 
Here, we write, for simplicity and in slight abuse of notation, the basis functions as functions of $(X_{j-1},X_j)$ instead of $(X_{j-1},B_j)$. Note that, e.g., the fourth basis function represents 
the price of the corresponding call spread option at time $t_j$ on the asset which is the largest one at time $t_{j-1}$. Shifting the time index in the indicator by one time step (compared 
to the more intuitive function basis in \cite{AB04} which is based on the largest asset at time $t_j$) 
turned out to be inessential in this numerical example, but ensures that the
 'one-step' conditional expectations $R_{j-1}^k (X_{j-1})$ in \eqref{eq:basis} are available in closed form.
These are, essentially, Black-Scholes prices and Black-Scholes deltas of European options at time $t_{j-1}$ on the asset which is the (second) largest at time $t_{j-1}$.

With these basis functions, we construct input approximations as described in Section \ref{sec input approximation}. For the martingale minimization algorithm,
 we run as before $\Lambda^{mini}= \Lambda^{test}=1000$ paths and take the penalization parameter from the set  $\{\gamma_1,...,\gamma_{21}\} = \{0, \ 0.025, ..., 0.5 \}$.
The modified LSMC approach is applied with $\Lambda^{reg}=100.000$ regression paths.
Tables \ref{table non-generic minimization} and \ref{table non-generic regression} below display the corresponding 
upper and lower bound estimators as well as iterative improvements up to the second order, based on these two input
approximations. As before, we denote by $\hat{Y}_0^{(up,k)}$ and $\hat{Y}_0^{(low,k)}$ the upper respectively lower bound resulting from the $k$-th improvement.
\begin{table}[htbp]
 \centering
 \begin{tabular}{|c||c|c|c||c|c|c|}
 \hline  
 $\rho$ & \multicolumn{3}{|c||}{$0.3$} & \multicolumn{3}{|c|}{$-0.3$}\\
 \hline 
 $J$ & 20 & 30 & 40 & 20 & 30 & 40 \\
 \hline \vspace{-11pt} & & & & & &\\
 $\hat{Y}_0^{(up,0),mini}$ & $\underset{(0.0694)}{13.3465}$ & $\underset{(0.0738)}{13.3766}$ & $\underset{(0.0741)}{13.5420}$ & $\underset{(0.0658)}{14.7198}$ & $\underset{(0.0676)}{14.9629}$ & $\underset{(0.0634)}{15.0104}$  \\
 \hline \vspace{-11pt} & & & & & &\\
 $\hat{Y}_0^{(up,1),mini}$ & $\underset{(0.0065)}{13.0424}$ & $\underset{(0.0072)}{13.0595}$ & $\underset{(0.0076)}{13.0738}$ & $\underset{(0.0065)}{14.2064}$ & $\underset{(0.0064)}{14.2828}$ & $\underset{(0.0071)}{14.3567}$ \\
 \hline \vspace{-11pt} & & & & & &\\
 $\hat{Y}_0^{(up,2),mini}$ & $\underset{(0.0070)}{13.0423}$ & $\underset{(0.0070)}{13.0768}$ & $\underset{(0.0071)}{13.0751}$ & $\underset{(0.0060)}{14.1761}$ & $\underset{(0.0056)}{14.1939}$ & $\underset{(0.0057)}{14.2293}$  \\
 \hline \vspace{-11pt} & & & & & &\\
 $\hat{Y}_0^{(low,0),mini}$ & $\underset{(0.0076)}{12.9953}$ & $\underset{(0.0102)}{12.9737}$ & $\underset{(0.0098)}{12.9923}$ & $\underset{(0.0112)}{14.0466}$ & $\underset{(0.0093)}{14.0773}$ & $\underset{(0.0123)}{14.0754}$  \\
 \hline \vspace{-11pt} & & & & & &\\
 $\hat{Y}_0^{(low,1),mini}$ & $\underset{(0.0068)}{13.0167}$ & $\underset{(0.0076)}{13.0171}$ & $\underset{(0.0082)}{13.0101}$ & $\underset{(0.0075)}{14.0835}$ & $\underset{(0.0075)}{14.1011}$ & $\underset{(0.0093)}{14.1035}$ \\
 \hline
 \end{tabular}
 \caption{Upper and lower bounds based on the non-generic minimization algorithm for different time discretizations. Standard deviations are given in brackets.}
 \label{table non-generic minimization}
\end{table}

\begin{table}[htbp]
 \centering
 \begin{tabular}{|c||c|c|c||c|c|c|}
 \hline  
 $\rho$ & \multicolumn{3}{|c||}{$0.3$} & \multicolumn{3}{|c|}{$-0.3$}\\
 \hline 
 $J$ & 20 & 30 & 40 & 20 & 30 & 40 \\
 \hline \vspace{-11pt} & & & & & &\\
 $\hat{Y}_0^{(up,0),reg}$ & $\underset{(0.0654)}{13.2481}$ & $\underset{(0.0660)}{13.3234}$ & $\underset{(0.0694)}{13.3730}$ & $\underset{(0.0720)}{14.7348}$ & $\underset{(0.0726)}{14.9905}$ & $\underset{(0.0782)}{14.9565}$ \\
 \hline \vspace{-11pt} & & & & & &\\
 $\hat{Y}_0^{(up,1),reg}$ & $\underset{(0.0061)}{13.0439}$ & $\underset{(0.0057)}{13.0479}$ & $\underset{(0.0056)}{13.0675}$ & $\underset{(0.0060)}{14.2022}$ & $\underset{(0.0063)}{14.2704}$ & $\underset{(0.0064)}{14.3315}$  \\
 \hline \vspace{-11pt} & & & & & &\\
 $\hat{Y}_0^{(up,2),reg}$ & $\underset{(0.0064)}{13.0503}$ & $\underset{(0.0065)}{13.0681}$ & $\underset{(0.0067)}{13.0857}$ & $\underset{(0.0057)}{14.1840}$ & $\underset{(0.0056)}{14.2207}$ & $\underset{(0.0059)}{14.2351}$  \\
  \hline \vspace{-11pt} & & & & & &\\
 $\hat{Y}_0^{(low,0),reg}$ & $\underset{(0.0070)}{12.9958}$ & $\underset{(0.0079)}{13.0059}$ & $\underset{(0.0086)}{12.9979}$ & $\underset{(0.0113)}{14.0320}$ & $\underset{(0.0099)}{14.0584}$ & $\underset{(0.0129)}{14.0638}$ \\
 \hline \vspace{-11pt} & & & & & &\\
 $\hat{Y}_0^{(low,1),reg}$ & $\underset{(0.0068)}{13.0171}$ & $\underset{(0.0075)}{13.0192}$ & $\underset{(0.0082)}{13.0118}$ & $\underset{(0.0074)}{14.0864}$ & $\underset{(0.0077)}{14.1046}$ & $\underset{(0.0093)}{14.1090}$  \\
 \hline
 \end{tabular}
 \caption{Upper and lower bounds based on the modified LSMC algorithm for different time discretizations. Standard deviations are given in brackets.}
 \label{table non-generic regression}
\end{table}
By and large, we find that the quality of the upper bound estimators $\hat{Y}_0^{(up,0),mini}$ and $\hat{Y}_0^{(up,0),reg}$, computed from the two different methods 
to obtain the coefficients for the input approximation, is almost identical. They typically vary by less than two empirical standard deviations.
The same holds true for the lower bounds $\hat{Y}_0^{(low,0),mini}$ and $\hat{Y}_0^{(low,0),reg}$. We also observe that, compared to the generic implementation,
the input lower bounds $\hat{Y}_0^{(low,0),mini}$ and $\hat{Y}_0^{(low,0),reg}$ are of the same quality as the generic lower bounds in Table \ref{table generic} 
$\hat{Y}_0^{(low,1),a^*}$ after one iterative improvement. Similarly, one improvement step of the upper bound in both non-generic cases $\hat{Y}_0^{(up,1),mini}$ and $\hat{Y}_0^{(up,1),reg}$ 
is comparable with two improvement steps in the generic setting $\hat{Y}_0^{(up,2),a^*}$. Recalling the large computational costs for the second improvement step, we observe that incorporating soft 
problem information into the function basis (here, the indicator function on the largest and second-largest asset one time step before) can significantly help to pin down the nonlinear option price 
 $Y_0$ into a rather tight confidence interval after one iteration step only (and, hence, at moderate costs). For the sake of completeness, we also report the numerical results 
after performing a second iteration step for the upper bounds in the non-generic case. While in the case of negative correlation, we obtain a further improvement and end up with 
a confidence interval of a relative width of less than 1.5 \% for $J=40$ time steps, the situation for the positive correlation case is different. Here, the theoretical improvement 
of the upper bound is offset by the additional upward bias due to the small number of inner paths. In this case, however, the relative width of the 95\% confidence interval is about 0.75\% already after one iteration step, and, thus, any further improvement seems to be unnecessary for the option pricing problem under consideration.

\end{document}